\newtheorem{theorem}{Theorem}[section]
\newtheorem{corollary}[theorem]{Corollary}
\newtheorem{definition}[theorem]{Definition}
\newtheorem{lemma}[theorem]{Lemma}
\newtheorem{proposition}[theorem]{Proposition}
\newtheorem{remark}[theorem]{Remark}
\numberwithin{equation}{section}
\newcommand{\cG}{\mathcal{G}}
\newcommand{\Hag}{H_{\alpha\sigma_M}}
\providecommand{\C}[1]{\mathcal{#1}}
\newcommand{\G}{\Gamma}
\newcommand{\Sphere}{{\mathbb S}}
\newcommand{\RR}{{\mathbb R}}
\newcommand{\R}{{\mathbb R}}
\newcommand{\CC}{{\mathbb C}}
\newcommand{\NN}{{\mathbb N}}
\newcommand{\KK}{{\mathbb K}}
\newcommand{\supp}{\mathrm{supp}\,}
\providecommand{\C}[1]{\mathcal{#1}}
\newcommand{\aaa}{\C{E}}
\newcommand{\cE}{\C{E}}
\newcommand{\DD}{\C{D}}
\newcommand{\cD}{\C{D}}
\newcommand{\capp}{\mathrm{cap}}
\newcommand{\phim}{\Phi^{-1}}
\begin{document}
%
\title[Eigenfunctions and spectral theory for forms]{Generalized eigenfunctions and spectral theory for strongly local Dirichlet forms}
\author[D.~Lenz]{Daniel Lenz$^1$}
\address{$^1$ Mathematisches Institut, Friedrich-Schiller Universit\"at Jena,
  Ernst-Abb\'{e} Platz 2, 07743 Jena, Germany}
\email{ daniel.lenz@uni-jena.de }
\urladdr{http://www.analysis-lenz.uni-jena.de/}

\author[P.~Stollmann]{Peter Stollmann$^2$}
\address{$^2$ Fakult\"at f\"ur Mathematik,
           Technische Universit\"at, 09107 Chemnitz, Germany }
\email{ peter.stollmann@mathematik.tu-chemnitz.de }
\author[I.~Veseli\'c]{Ivan Veseli\'c$^3$}
\address{$^3$Emmy-Noether-Project \emph{Schr\"odingeroperatoren}, Fakult\"at f\"ur Mathematik,\, 09107\, TU Chemnitz, Germany   }
\urladdr{http://www.tu-chemnitz.de/mathematik/enp/}

\subjclass[2000]{35P05, 81Q10}

\keywords{Dirichlet forms, weak solutions, spectral properties}

\date{June 30, 2009}

\begin{abstract}
We present an introduction to the framework of strongly local Dirichlet forms and discuss connections between the existence of certain generalized eigenfunctions and spectral properties within this framework. The range of applications is illustrated by a list of examples.

\end{abstract}
\maketitle
\section*{Introduction}
There is a long history to the study of  connections between the spectrum of a selfadjoint differential  operator and properties of generalized solutions to the associated  eigenvalue equation.
In this context, the  following two 'meta theorems' have attracted particular attention:

\begin{itemize}

\item Positive generalized eigenfunctions exist for energies below the spectrum and   the spectrum begins at the energy, where positive generalized eigenfunctions cease to exist.

\item The spectrum is  given by those energies, for which a (suitably) bounded generalized solution exists.
\end{itemize}

 The first statement is sometimes discussed under the name of 'Allegretto Piepenbrink theorem'. The second statement is discussed under the heading  of 'Shnol theorem'. Precise versions (and proofs) of these statements have been given in various contexts. It turns out that the  framework of (strongly local) Dirichlet forms allows one to give a unified and structurally rather simple discussion of these two results. This has recently be   shown in \cite{LenzSV} (for the Allegretto Piepenbrink theorem) and in \cite{BoutetdeMonvelLS-08} for the Shnol type result,
see the results on expansion in eigenfunctions in \cite{BoutetdeMonvelS-03b} as well.
The mentioned framework includes a variety of operators among them  Schr\"odinger operators, (uniform) elliptic operators on manifolds and (suitable) quantum graphs. Accordingly, the mentioned results have a rather broad  applicability.

Our aim here is to discuss this approach to basic spectral theory via Dirichlet forms in a way that is accessible to the non-specialist. In this way, we  will not only feature the  Shnol Theorem and the Allegretto Piepenbrink theorems of \cite{BoutetdeMonvelLS-08,LenzSV}  but also hope  to advertise the use of Dirichlet forms in spectral problems.  For this reason we  also conclude the paper with  a discussion of  various applications.
The results in this paper are concerned with strongly local Dirichlet forms. A study of similar results for non-local Dirichlet forms (e.g. graphs) can be found  in \cite{FrankLWpre}.

\medskip

The organisation of this paper is as follows. We give a introduction into Dirichlet froms in Sections \ref{Strongly}, \ref{Measure} and \ref{Weak}.
This introduction is aimed at a non-specialist. We then discuss a version of Allegretto-Piepenbrink Theorem in Section \ref{Positive} and results related to Shnol's Theorem in Section \ref{Weaksolutions}. These sections contain sketches of ideas and proofs. Finally, we discuss applications  in Section \ref{Examples}.

\section{Strongly local Dirichlet forms}\label{Strongly}
In this section we describe the set-up used throughout the paper.
We refer to \cite{Fukushima-80} as to the
classical standard reference as well as
\cite{BouleauH-91,Davies-90b,FukushimaOT-94,MaR-92} for literature on
Dirichlet forms. We treat real and complex function
spaces at the same time and write $\KK$ to denote either $\RR$ or $\CC$.

Throughout we will work with a locally compact, separable metric space
$X$ endowed with a positive Radon measure $m$ with $ \supp m=X$.

\subsection*{Dirichlet forms}
The central object of our studies is a regular
Dirichlet form $\mathcal{E}$ with domain $\mathcal{D}$ in $L^2(X)$ and
the selfadjoint operator $H_0$ associated with $\mathcal{E}$. In order to precisely define these notions we
 recall the basic terminology of Dirichlet forms:
Consider a dense subspace $\mathcal{D} \subset L^2(X,m)$  and a
sesquilinear and non-negative map $\mathcal{E}\colon\mathcal{D} \times \mathcal{D} \rightarrow \KK$
such that $\mathcal{D}$ is closed with respect to the energy
norm $\|\,\cdot\,\|_\mathcal{E}$, given by
$$
\|u\|_\mathcal{E}^2=\mathcal{E}[u,u] +\| u\|_{L^2(X,m)}^2,
$$
in which case one speaks of a \textit{closed form} in $L^2(X,m)$. In the sequel we will write
$$\mathcal{E}[u]:= \mathcal{E} [u,u]. $$
  The selfadjoint operator $H_0$
associated with $\mathcal{E}$ is then characterized by
$$
D(H_0)\subset \mathcal{D} \ \mbox{and } \mathcal{E}[f,v]=(H_0f\mid
v)\quad (f\in D(H_0), v\in \mathcal{D}).
$$
Such a closed form is said to be a \textit{Dirichlet form} if
$\mathcal{D}$ is stable under certain pointwise operations; more
precisely, $T:\KK\to\KK$ is called a \emph{normal contraction} if
$T(0)=0$ and $|T(\xi)-T(\zeta)|\le |\xi -\zeta|$ for any
$\xi,\zeta\in\KK$ and we require that for any $u\in \mathcal{D}$ also
$$
T\circ u\in \mathcal{D}\mbox{ and }\mathcal{E}[T\circ u]\le
\mathcal{E}[u].
$$
In the real case, this condition is often  replaced by equivalent
but formally weaker statement involving $u\vee 0$ and $u\wedge 1$, see
\cite{Fukushima-80}, Thm. 1.4.1 and  \cite{MaR-92}, Section I.4.

A Dirichlet form is called \textit{regular} if $\mathcal{D} \cap
C_c(X)$ is large enough so that it is dense both in $(\mathcal{D},\| \cdot \|_\mathcal{E})$ and
$(C_c(X),\| \cdot \|_{\infty })$, where $C_c(X)$ denotes the space of
continuous functions with compact support.

\subsection*{Examples}
Here, we discuss some examples showing the wide range of applicability of Dirichlet forms.

\medskip

\textit{The Laplacian on Euclidean space.} The Laplacian in Euclidean space is the typical example to be kept in mind. It is given by
$$H_0=-\Delta \mbox{ on }L^2(\Omega ),\quad \Omega \subset \mathbb{R}^d\mbox{ open, }$$
in which case
$$
\mathcal{D}=W^{1,2}_0(\Omega )\mbox{ and
}\mathcal{E}[u,v]=\int_{\Omega }(\nabla u| \nabla v)dx.$$

Note that for  differentiable contractions $T : \RR\longrightarrow \RR$ the chain rule easily gives the crucial Dirichlet form property for real valued functions $u$  as
$$ \mathcal{E}(Tu) = \int_\Omega  ( \nabla Tu, \nabla Tu) dx = \int_\Omega |T'(u(x))|^2 (\nabla u, \nabla u) dx \leq \mathcal{E} (u)$$
as $|T'(z)| \leq 1$ for all $z\in\CC$.

\medskip

\textit{Uniform elliptic operators in Euclidean space:}
In the previous example we can allow for quite irregular coefficients of the differential operator.
More precisely, let $\Omega\subset \R^d$ open and  let $A$ be a measurable map from $\Omega$ into the symmetric $d\times d$ matrices. Assume that there exist $c,C>$ such that the eigenvalues of $A(x)$ lie in $[c,C]$ for all $x\in \Omega$. Then, the form $\mathcal{E}_A$ defined on $W^{1,2}_0(\Omega )$ by
$$ \mathcal{E}_A[u,v]=\int_{\Omega }(A(x) \nabla u| \nabla v)dx$$
is a regular Dirichlet form.

\medskip

\textit{Laplace Beltrami  and unifom elliptic operators on manifolds:} The previous example can easily be generalized to {Laplace Beltrami} operators on Riemannian manifolds:  Let $M$ be a Riemannian manifold with metric tensor $g$ and exterior derivative $d$. Then, the form
$$\mathcal{E}_c (u,v) :=\int_M (d u, d v) dx$$
defined for  $u,v\in C_c^\infty (M)$ is closable. The closure is a Dirichlet form and its domain of definition is given by  $W^{1,2}_0 (M)$. The generator is the Laplace Beltrami operator.
Again, we can allow for a measurable map $A$ from $M$ into the symmetric linear maps on the corresponding cotangent spaces with eigenvalues lying in some interval $[c,C]$ for $c,C>0$ and obtain the Dirichlet form
$$ \mathcal{E}_A (u,v) :=\int_M (A \, d u, d v) dx$$
defined on $W^{1,2}_0 (M)$. These examples can be further generalized to allow for some subriemannian manifolds. We will not give details here.

\medskip

\textit{Quantum graphs with Kirchhoff boundary conditions}: This example has received attention in recent times. We refrain from giving details here but refer to the last  section of the paper.

\medskip

\subsection*{Capacity}
The \emph{capacity} is a set function that allows one  to measure the size of sets in a way that is adapted to the form $\aaa$.

\medskip

For $U\subset X$, $U$ open, we define
$$
\capp(U):=\inf\{\| v \|^2_\mathcal{E} \mid v\in\DD, \chi_U\le v\},$$
where we set $ (\inf\emptyset =\infty )$.
For arbitrary $A\subset X$, we then set
$$
\capp(A):=\inf\{ \capp(U) \mid A\subset U\}
$$
(see \cite{Fukushima-80}, p. 61f.).  We say that a property holds
\emph{quasi-everywhere}, short \emph{q.e.}, if it holds outside a set of
capacity $0$. A function $f:X\to\KK$ is said to be \emph{quasi-continuous},
\emph{q.c.} for short, if, for any $\varepsilon >0$ there is an open set
$U\subset X$ with $\capp(U)\le \varepsilon$ so that the restriction of $f$ to
$X\setminus U$ is continuous.

A fundamental result in the theory of Dirichlet forms says that every
$u\in\DD$ admits a q.c. representative $\tilde{u}\in u$ (recall that $u\in
L^2(X,m)$ is an equivalence class of functions) and that two such q.c.
representatives agree q.e. Moreover, for every Cauchy sequence $(u_n)$ in
$(\mathcal{D},\| \cdot \|_\mathcal{E})$ there is a subsequence $(u_{n_k})$
such that the $(\tilde{u}_{n_k})$ converge q.e. (see \cite{Fukushima-80},
p.64f).

Whenever we will write expressions containing pointwise evaluations of functions  $u$ in the future, we will  assume that a quasi continuous representative has been chosen.

\subsection*{Strong locality and the energy measure}

\medskip

$\mathcal{E}$ is called \textit{strongly local} if
$$\mathcal{E}[u,v]=0$$
whenever $u$ is constant a.s. on the support of $v$.

Every strongly local, regular Dirichlet form $\mathcal{E}$
can be represented in the form
\[
\mathcal{E}[u,v] = \int_X d\Gamma (u,v)
\]
where $\Gamma $ is a nonnegative sesquilinear mapping from
$\mathcal{D}\times\mathcal{D}$ to the set of $\KK$-valued Radon
measures on $X$. It is determined by
\[
\int_X \phi\, d\Gamma(u,u) = \mathcal{E}[u,\phi u] -\frac12
\mathcal{E}[u^2,\phi ]
\]
for realvalued $u\in\DD$, $\phi\in \mathcal{D} \cap
C_c(X)$ and called \textit{energy measure}; see also \cite{BouleauH-91}.

Obviously, all examples discussed in the preceeding subsection are strongly local. In the case of the Laplacian in Euclidean space, the measure $\Gamma$ is given by $(\nabla u |\nabla v)dx$ appearing above.

We discuss properties of the energy measure next (see e.g. \cite{BouleauH-91, Fukushima-80, Sturm-94b}).

The energy measure inherits strong locality from $\aaa$ viz $\chi_U d\Gamma (\eta,u)=0$ holds
 for any open $U\in X$ and any $\eta,u\in \DD$ with $\eta$ constant on $U$.  This directly  allows one to extend $\Gamma$  to $\mathcal{D}_{\text{loc}} $  defined as
\[
\lbrace u\in L^2_{\text{loc}}\mid \text{for all compact }
K \subset X \text{ there is  }
\phi \in \mathcal{D} \text{ s.\,t. }
\phi =u  \ m\text{-a.e.{} on }K\},
\]
We will denote this extension by $\Gamma$ again.  This extension is \textbf{strongly local} again i.e. satisfies
\label{local}
\begin{equation*}
  \chi_U d\Gamma (\eta,u) = 0,
\end{equation*}
for any open $U\in X$ and any $\eta,u\in \DD_{\text{loc}}$ with $\eta$ constant on $U$.
The set $\DD$ is then given as the set of all $u\in \DD_{\text{loc}}$ with $\int 1 d\Gamma (u) < \infty$.
The energy measure satisfies the \textbf{Leibniz rule},
\[
d\Gamma(u\cdot v,w)=ud\Gamma (v,w)+vd\Gamma (u,w),
\]
for all $u,v\in \mathcal{D}_{\text{loc}}\cap L^\infty_{\text{loc}} (X)$.
(In fact strong locality is of $\aaa$ is   equivalent to the
validity of the Leibniz rule for functions in $\DD\cap L^\infty_{\text{loc}}$.)
The energy measure also satisfies
the chain rule
\[
d\Gamma (\eta (u),w)=\eta'(u)d\Gamma (u,w)
\]
whenever  $u,w\in \mathcal{D}_{\text{loc}}\cap L^\infty_{\text{loc}}$ are real valued and $\eta$ is continuously differentiable.

We write $d\Gamma(u):=d\Gamma(u,u)$ and note that the energy measure
satisfies the \textbf{Cauchy-Schwarz inequality}:
\begin{eqnarray*}
  \int_X|fg|d|\Gamma(u,v)| & \le &
  \left(\int_X|f|^2d\Gamma(u)\right)^{\frac12}\left(\int_X|g|^2d\Gamma(v)\right)^{\frac12}\\
  & \le & \frac12 \int_X|f|^2d\Gamma(u)+ \frac12\int_X|g|^2d\Gamma(v)
\end{eqnarray*}
for all $u,v\in \mathcal{D}_{\text{loc}}$ and $f,g :X\longrightarrow \CC$ measurable.

Due to Leibniz rule the sets $\DD$ and $\DD_{\text{loc}}$ resp. have certain closedness properties under multiplication. This is an interesting feature and we discuss it next.

It is not hard to see that any function in $u\in \mathcal{D}_{\text{loc}}$ with compact support belongs in fact to $\DD$ (as $\int d\Gamma (u) = \int_{\supp u} d\Gamma (u) < \infty$). More generally,  localized versions of functions from $\mathcal{D}_{\text{loc}}$ belong to $\DD$. More precisely, the following holds \cite{LenzSV}.

\begin{lemma}\label{localversion}
\begin{itemize}
 \item [(a) ] Let $\Psi\in \mathcal{D}_{\text{loc}}\cap L^\infty_ {\text{loc}}(X)$ and $\varphi \in \mathcal{D}\cap L^\infty_c (X)$ be given. Then, $\varphi \Psi$ belongs to $\mathcal{D}$.
 \item [(b) ] Let $\Psi\in \mathcal{D}_{\text{loc}}$ and $\varphi \in \mathcal{D}\cap L^\infty_c (X)$ be such that $d\Gamma(\varphi)\le C\cdot dm$. Then, $\varphi \Psi$ belongs to $\mathcal{D}$.
\end{itemize}
\end{lemma}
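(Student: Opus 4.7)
The plan is to use the Leibniz rule, the Cauchy-Schwarz inequality for the energy measure, and the characterization $\mathcal{D}=\{u\in\mathcal{D}_{\text{loc}}:\int d\Gamma(u)<\infty\}$ explained above.

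For part (a) the strategy is to reduce to the case of a bounded, global approximant of $\Psi$. Since $\Psi\in\mathcal{D}_{\text{loc}}$ and $\supp\varphi$ is compact, there exists $\tilde\Psi\in\mathcal{D}$ with $\tilde\Psi=\Psi$ a.e.\ on $\supp\varphi$. Replacing $\tilde\Psi$ by its truncation $(\tilde\Psi\wedge M)\vee(-M)$ (a normal contraction, hence staying in $\mathcal{D}$) for $M\ge\|\Psi\|_{L^\infty(\supp\varphi)}$ leaves it unchanged on $\supp\varphi$ while making it globally bounded, so I may assume $\tilde\Psi\in\mathcal{D}\cap L^\infty$. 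As $\varphi$ vanishes off $\supp\varphi$, the products agree: $\varphi\Psi=\varphi\tilde\Psi$ a.e. It therefore suffices to show $\varphi\tilde\Psi\in\mathcal{D}$, which is the well-known fact that $\mathcal{D}\cap L^\infty$ is closed under products; a direct proof uses Leibniz and Cauchy-Schwarz to bound
\begin{equation*}
\int d\Gamma(\varphi\tilde\Psi)\le 2\|\varphi\|_\infty^2\int d\Gamma(\tilde\Psi)+2\|\tilde\Psi\|_\infty^2\int d\Gamma(\varphi)<\infty.
\end{equation*}

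For part (b), the absence of the hypothesis $\Psi\in L^\infty_{\text{loc}}$ forces an approximation. Set $\Psi_M:=(\Psi\wedge M)\vee(-M)$, so $\Psi_M\in\mathcal{D}_{\text{loc}}\cap L^\infty_{\text{loc}}$ and by part (a) $\varphi\Psi_M\in\mathcal{D}$. I claim $\{\varphi\Psi_M\}$ is Cauchy in the energy norm. The chain rule gives $d\Gamma(\Psi_M-\Psi_N)=\chi_{\{N<|\Psi|<M\}}\,d\Gamma(\Psi)$, and after expanding $d\Gamma(\varphi(\Psi_M-\Psi_N))$ via Leibniz and handling the cross term by Cauchy-Schwarz, one arrives at
\begin{equation*}
\int d\Gamma\bigl(\varphi(\Psi_M-\Psi_N)\bigr)\le 2\|\varphi\|_\infty^2\int_{\supp\varphi\cap\{|\Psi|>N\}}d\Gamma(\Psi)+2C\int_{\supp\varphi\cap\{|\Psi|>N\}}\Psi^2\,dm,
\end{equation*}
where the hypothesis $d\Gamma(\varphi)\le C\,dm$ has been used in the last term. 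Both summands tend to $0$ as $N\to\infty$ by dominated convergence: $d\Gamma(\Psi)$ is a finite measure on the compact set $\supp\varphi$ that assigns no mass to the polar set $\{|\Psi|=\infty\}$, and $\Psi\in L^2(\supp\varphi,m)$ since $\mathcal{D}_{\text{loc}}\subset L^2_{\text{loc}}$. The $L^2$-part of the energy norm is estimated analogously. Closedness of $\mathcal{D}$ then yields a limit in $\mathcal{D}$, which must equal $\varphi\Psi$ by $L^2$-convergence.

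The main obstacle is part (b): since $\Psi$ need not be bounded on compacts, the Leibniz rule does not apply to $\varphi\Psi$ directly, and any approximation must simultaneously control the $\varphi^2\,d\Gamma(\Psi)$ and $\Psi^2\,d\Gamma(\varphi)$ contributions. The hypothesis $d\Gamma(\varphi)\le C\,dm$ is precisely what allows the latter to be dominated by an $L^2(m)$-bound on $\Psi$, which is all that is available in the absence of local boundedness.
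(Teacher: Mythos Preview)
The paper does not prove this lemma in the text; it refers to \cite{LenzSV}. Your argument is correct and follows what is presumably the natural route: for (a), reduce to the algebra property of $\mathcal{D}\cap L^\infty$ via Leibniz and Cauchy--Schwarz; for (b), truncate and run a Cauchy-sequence argument in the energy norm, where the hypothesis $d\Gamma(\varphi)\le C\,dm$ is precisely what converts the otherwise uncontrolled term $\int(\Psi_M-\Psi_N)^2\,d\Gamma(\varphi)$ into a tractable $L^2(m)$-integral.

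Two minor technical dependencies in (b) are worth flagging. The identity $d\Gamma(\Psi_M-\Psi_N)=\chi_{\{N<|\Psi|<M\}}\,d\Gamma(\Psi)$ uses the chain rule for a piecewise-linear (Lipschitz, not $C^1$) function, which is standard but goes beyond the $C^1$ version recorded earlier in the paper. The convergence $\Gamma(\Psi)\bigl(\supp\varphi\cap\{|\Psi|>N\}\bigr)\to 0$ rests on the fact that energy measures charge no sets of capacity zero, another standard result you invoke without comment. You also tacitly treat $\Psi$ as real-valued when writing $(\Psi\wedge M)\vee(-M)$; the complex case follows by handling real and imaginary parts separately. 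With these caveats your proof is complete.
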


Part (a) of the lemma gives in particular that  $\DD\cap C_c (X) = \DD_{\text{loc}}\cap C_c (X)$ and $\DD \cap L^\infty_c (X) = \DD_{\text{loc}}\cap L^\infty_c (X)$  are closed under multiplication.

\medskip

In order to introduce weak solutions on open subsets $U$  of $X$, we extend $\aaa$
 to $\mathcal{D}_{\text{loc}}(U)\times \mathcal{D}_c (U)
$: where,
$$
\mathcal{D}_{\text{loc}}(U):=
\lbrace u\in
L^2_{\text{loc}}(U)\mid \forall \text{compact}\;\:
K \subset U \exists\;
\phi \in \mathcal{D} \text{ s.\,t. }
\phi =u  \ m\text{-a.e.{} on }K\}
$$
$$
\mathcal{D}_c(U):=
\lbrace \varphi\in \mathcal{D}|\supp\varphi\mbox{  compact in }U\rbrace .
$$
For $u\in \mathcal{D}_{\text{loc}}(U),
\varphi\in \mathcal{D}_c(U)$ we define
$$
\aaa[u,\varphi]:=\aaa[\eta u,\varphi].
$$
Here,  $\eta \in \mathcal{D}\cap C_c(U)$ is arbitrary with constant value
$1$ on the support of $\varphi$. This makes sense as the RHS does not depend
on the particular choice of $\eta$ by strong locality.

Obviously,  also $\Gamma$ extends
to a mapping $\Gamma:\mathcal{D}_{\text{loc}}(U)\times
\mathcal{D}_{\text{loc}}(U)\to \C{M}_R(U)$.

\subsection*{The intrinsic metric, strict locality and cut-off functions}
Using the energy measure one can define the \textit{intrinsic metric}
$$\rho: X\times X\longrightarrow [0,\infty]$$ by
$$
\rho (x,y)=\sup \lbrace |u(x)-u(y)|\ | u\in
\mathcal{D}_{\text{loc}}\cap C(X) \mbox{ and } d\Gamma (u)\leq
dm\rbrace
$$
where the latter condition signifies that $\Gamma (u)$ is absolutely
continuous with respect to $m$ and the Radon-Nikodym derivative is
bounded by $1$ on $X$. Despite its name, in general, $\rho$ need not be a
metric. However, it is a pseudo metric viz it is symmetric, satisfies $\rho (x,x)=0$ for all $x\in X$ and satisfies the triangle inequality.

We say that $\aaa$ is \emph{strictly local} if $\rho$ is a metric that
induces the original topology on $X$.

Note that strict locality  implies that  $X$ is connected, since
otherwise points in $x,y$ in different connected components would give $\rho(x,y)=\infty$, as characteristic functions of connected components are continuous
and have vanishing energy measure.

We denote the intrinsic balls by
$$B(x,r):=\{ y\in X| \rho(x,y)\le r\} .$$
An important consequence of strict locality  is that the distance
function $\rho_x(\cdot):=\rho(x,\cdot)$ itself is a function in
$\mathcal{D}_{\text{loc}}$ with $d\Gamma(\rho_x)\le dm$, see
\cite{Sturm-94b}. This easily extends to the fact that for every
closed $E\subset X$ the function $\rho_E(x):= \inf\{ \rho(x,y)|y\in
E\}$ enjoys the same properties (see the Appendix of \cite{BoutetdeMonvelLS-08}). This has a very
important consequence. Whenever $\zeta : \RR\longrightarrow \RR$ is
continuously differentiable, and $\eta :=\zeta \circ \rho_E$, then
$\eta$ belongs to $\mathcal{D}_{\text{loc}}$ and satisfies
\begin{equation}\label{ac}
  d \Gamma (\eta) = (\zeta'\circ \rho_E)^2 d \Gamma (\rho_E)\leq (\zeta'\circ \rho_E)^2  dm.
\end{equation}
For this reason a lot of good cut-off functions are around in our context.
More explicitly we note the following lemma (Lemma 1.3 in \cite{LenzSV}, see \cite{BoutetdeMonvelLS-08} as well).

\begin{lemma} \label{cutoff} For any compact $K$ in $X$ there exists a $\varphi \in C_c (X)\cap \mathcal{D}$ with $\varphi \equiv 1$ on $K$, $\varphi \geq 0$ and $d\Gamma (\varphi)\leq C \, dm$ for some $C>0$. If $L$ is another compact set containing $K$ in its interior, then $\varphi$ can be chosen to have support in $L$.
\end{lemma}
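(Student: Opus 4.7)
The plan is to build $\varphi$ by composing the intrinsic distance to a suitably chosen closed set with a smooth one-dimensional cut-off, and to exploit the chain rule estimate \eqref{ac}. If only $K$ is given, local compactness of $X$ provides a compact $L$ containing $K$ in its interior, so it suffices to prove the more precise statement. Set $E := X \setminus L^\circ$, which is closed and disjoint from $K$, and consider $\rho_E(x) = \inf_{y \in E}\rho(x,y)$.

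The first geometric step is to produce a uniform $r > 0$ with $\rho_E(x) \geq r$ for all $x \in K$. Since $\aaa$ is strictly local, $\rho$ induces the original topology on $X$; hence $\rho_E$ is $1$-Lipschitz with respect to $\rho$, in particular continuous, and $\rho_E(x) > 0$ for every $x \in K$ (for each such $x$ an open $\rho$-ball around $x$ lies inside $L^\circ$, because $L^\circ$ is an open neighborhood of $x$). Continuity of $\rho_E$ on the compact set $K$ then forces the infimum $r := \min_{x \in K} \rho_E(x)$ to be strictly positive. This is the only step that is not completely formal.

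Next, choose $\zeta \in C^1(\RR)$ with $\zeta \equiv 0$ on $(-\infty,0]$, $\zeta \equiv 1$ on $[r,\infty)$, $0 \leq \zeta \leq 1$, and $|\zeta'| \leq 2/r$, and set $\varphi := \zeta \circ \rho_E$. By construction $0 \leq \varphi \leq 1$, $\varphi \equiv 1$ on $K$, and $\varphi \equiv 0$ on $E$, so $\supp \varphi \subset L$; together with the continuity of $\varphi$ this gives $\varphi \in C_c(X)$. Applying the displayed inequality \eqref{ac} to $\eta = \varphi$ yields
\[
d\Gamma(\varphi) \leq (\zeta' \circ \rho_E)^2 \, dm \leq \tfrac{4}{r^2}\, dm,
\]
which is the claimed bound with $C := 4/r^2$ and in particular shows $\varphi \in \DD_{\text{loc}}$. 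Because $\varphi$ has compact support, the earlier observation that compactly supported elements of $\DD_{\text{loc}}$ belong to $\DD$ finally gives $\varphi \in \DD$, completing the proof.
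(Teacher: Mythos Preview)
Your proof is correct and follows precisely the approach the paper sets up in the discussion immediately preceding the lemma: composing a $C^1$ cut-off $\zeta$ with the distance function $\rho_E$ to a suitable closed set and invoking the chain-rule estimate \eqref{ac}. The paper does not spell out the proof (it refers to \cite{LenzSV} and \cite{BoutetdeMonvelLS-08}), but your argument is exactly the intended one; the only edge case not explicitly addressed is $L^\circ = X$ (i.e.\ $X$ compact and $L=X$), where $E=\emptyset$, but then $\varphi\equiv 1$ works trivially.
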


\subsection*{Irreducibility}
We will now discuss a notion that will be crucial in the proof of the
existence of positive weak solutions below the spectrum. In what follows,
$\mathfrak{h}$ will denote a densely defined, closed semibounded form in
$L^2(X)$ with domain $D(\mathfrak{h})$ and positivity preserving semigroup
$(T_t;t\ge 0)$.  We denote by $H$ the associated operator.  Actually, the
cases of interest in this paper are the situation that $\mathfrak{h}=\aaa$ is a
Dirichlet form as discussed above, or a measure perturbation thereof
$\mathfrak{h}=\aaa+\nu$. Here it is assumed that the positive and negative part of the measure $\nu$ obey
$\nu_+\in\ \mathcal{M}_{R,0}, \nu_-\in \mathcal{M}_{R,1}$,
where the classes
$\mathcal{M}_{R,0}, \mathcal{M}_{R,1}$ are discussed in the next section.

We say that $\mathfrak{h}$ is \emph{reducible}, if there is a
measurable set $M\subset X$ such that $M$ and its complement $M^c$ are
nontrivial (have positive measure) and $L^2(M)$ is a reducing subspace for
$M$, i.e., ${\mathds{1}}_MD(\mathfrak{h})\subset D(\mathfrak{h})$,
$\mathfrak{h}$ restricted to ${\mathds{1}}_MD(\mathfrak{h})$ is a closed form
and $\aaa(u,v) = \aaa(u{\mathds{1}}_M,v{\mathds{1}}_M) +\aaa(u{\mathds{1}}_{M^c},v{\mathds{1}}_{M^c}) $
for all $u,v$.
If there is no such decomposition of $\mathfrak{h}$, the latter form is called
\emph{irreducible}. Note that reducibility can be rephrased in terms of the
semigroup and the resolvent:

\begin{theorem}
 Let $\mathfrak{h}$ be as above. Then the following conditions are equivalent:
\begin{itemize}
 \item[(i)] $\mathfrak{h}$ is irreducible.
  \item [(ii)]$T_t$ is positivity improving, for every $t>0$, i.e. $f\ge 0$ and $f\not=0$ implies that $T_tf>0$ a.e.
 \item[(iii)] $(H+E)^{-1}$ is positivity improving for every $E<\inf\sigma(H)$.
\end{itemize}
\end{theorem}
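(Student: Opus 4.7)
The plan is to establish the chain $(i) \Leftrightarrow (ii)$ by a direct support analysis of the semigroup, and then deduce $(ii) \Leftrightarrow (iii)$ from the Laplace transform relating semigroup and resolvent. The unifying observation is that reducibility of $\mathfrak{h}$ with reducing set $M$ is equivalent to the statement that multiplication by $\mathds{1}_M$ commutes with $T_t$ for every $t > 0$, and likewise with $(H+E)^{-1}$ on its natural domain of existence; in either formulation the problem reduces to analysing invariance of the subspace $L^2(M) \subset L^2(X)$.

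For $(ii) \Rightarrow (i)$ I take the contrapositive. If $\mathfrak{h}$ is reducible with reducing set $M$, then $T_t$ leaves $L^2(M)$ invariant, so for any nonzero $0 \leq f \in L^2(M)$ the function $T_t f$ vanishes identically on $M^c$, a set of positive measure; this contradicts positivity improving. For $(i) \Rightarrow (ii)$, I again argue by contrapositive. Suppose $T_{t_0}$ fails to be positivity improving for some $t_0 > 0$ and pick nonzero $0 \leq f, g \in L^2(X)$ with $(T_{t_0} f, g) = 0$, so that $T_{t_0} f$ vanishes on $\{g > 0\}$. Using the semigroup law $T_{s + t_0} = T_s T_{t_0}$ together with positivity preservation, the set
\[
M := \bigl\{x \in X : (T_t f)(x) > 0 \text{ for some } t > 0 \bigr\}
\]
(interpreted modulo null sets) is shown to be invariant under $T_s$ for every $s > 0$, of positive measure (since $f \neq 0$ and $T_t f \to f$ in $L^2$ as $t \downarrow 0$), and with complement of positive measure (inherited from $\{g > 0\}$). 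Self-adjointness of $H$ then upgrades invariance to reduction, yielding an orthogonal decomposition $L^2(X) = L^2(M) \oplus L^2(M^c)$ that reduces $\mathfrak{h}$, contradicting irreducibility.

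The equivalence $(ii) \Leftrightarrow (iii)$ follows from the Laplace transform identity
\[
(H + E)^{-1} f = \int_0^\infty e^{-tE} T_t f \, dt,
\]
valid in the strong sense on the range of $E$ specified in $(iii)$. If each $T_t$ is positivity improving, the integrand is strictly positive a.e.\ for all $t > 0$ and $0 \leq f \neq 0$, and the integral inherits positivity improving. Conversely, starting from positivity improving of the resolvent, one invokes the exponential formula $T_t f = \lim_{n \to \infty} \bigl((n/t)(H + n/t)^{-1}\bigr)^n f$: each iterate is positivity improving as a composition of such operators acting on a strictly positive function, and this transfers to the limit on the positive nonzero inputs one is interested in. The principal obstacle is the construction of the reducing set $M$ in $(i) \Rightarrow (ii)$: one needs to verify that $M$ is measurable, that $M^c$ has positive measure, and that the resulting decomposition reduces the form $\mathfrak{h}$ itself rather than merely the operator $H$, which is precisely where the $\mathds{1}_M$--commutation characterization of reducibility is invoked.
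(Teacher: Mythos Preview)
The paper does not actually prove this theorem; it simply refers to Reed--Simon, Section~XIII.12, and a forthcoming article for the details. So there is no in-paper argument to compare against, and I assess your sketch on its own merits. There are two genuine gaps.

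In the contrapositive of $(i)\Rightarrow(ii)$, from $(T_{t_0}f,g)=0$ you only know that $T_{t_0}f$ vanishes on $\{g>0\}$. But your set $M$ is the union over \emph{all} $t>0$ of $\{T_tf>0\}$, so the inclusion $\{g>0\}\subset M^c$ would require $T_tf$ to vanish on $\{g>0\}$ for every $t>0$, not just for $t=t_0$. This is precisely the hard step: one must pass from $(T_{t_0}f,g)=0$ at a single time to $(T_tf,g)=0$ for all $t$. The standard route exploits that $t\mapsto(T_tf,g)$ is real-analytic (indeed extends holomorphically to a half-plane, since $H$ is self-adjoint and semibounded) combined with a positivity argument; without this, your $M^c$ may well be a null set and the construction collapses. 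You flag the issue (``$M^c$ has positive measure'') but the justification ``inherited from $\{g>0\}$'' is exactly what is not yet established.

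In $(iii)\Rightarrow(ii)$, the exponential formula exhibits $T_tf$ as a strong $L^2$-limit of the positivity-improving iterates $\bigl((n/t)(H+n/t)^{-1}\bigr)^n f$, but strict positivity does not survive strong limits: a sequence of a.e.\ strictly positive functions can converge in $L^2$ to a function vanishing on a set of positive measure (take $f_n=n^{-1}\mathds{1}_X$). So this step does not go through as written. The missing ingredient is again the analyticity of the semigroup, which is what upgrades ergodicity (equivalently: irreducibility, equivalently: positivity-improving resolvent) to positivity improving of every individual $T_t$; both gaps close once that lemma is in place.
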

We refer to \cite{ReedS-78}, XIII.12 and a forthcoming paper \cite{LenzSV-pre}
for details.

It is quite easy to see that a disconnected space easily leads to reducible forms. The converse is not exactly right, but there are recent results that go far in this direction and characterize irreducibility, cf \cite{terEstR-08}.

\subsection*{Assumptions}
For the convenience of the reader we gather in this section assumptions and notation used in the sequel.

\bigskip

We will exclusively deal with regular, strongly local Dirichlet forms $\aaa$. The corresponding energy measure is denoted by $\Gamma$. The associated intrinsic metric is denoted by $\rho$.

\smallskip

We always chose quasi continuous representatives for elements of $\DD_{\text{loc}}$.

\smallskip

The strongly local Dirichlet form $\aaa$  is \emph{strictly local} if $\rho$ is a metric that
induces the original topology on $X$. This condition can be slightly weakened.  It suffices to assume that the pseudometric $\rho$ induces the original topology on $X$ (as for a given $x\in X$ one can then always restrict attention to the set of $y$ with $\rho(x,y)<\infty$).

Later we will also encounter a growth assumption on the intrinsic metric.
\begin{itemize}
\item[(G)] \label{A1} All intrinsic balls have finite volume with subexponential
  growth:
$$
e^{-\alpha\cdot R}m(B(x,R))\to 0\mbox{ as }R\to\infty\mbox{ for all
}x\in X, \alpha>0 .$$
\end{itemize}

Finally, we note that $\aaa$ is called \textit{ultracontractive} if  for each $t>0$ the  semigroup  $e^{-tH_0}$ gives a map from  $L^2(X)$ to $L^\infty (X)$.

\section{Measure perturbations}\label{Measure}
We will be dealing with Schr\"odinger type operators, i.e.,
perturbations $H=H_0+V$, where $H_0$ is associated to a strictly local Dirichlet form and   the function $V$ is a suitable potential. In fact, we can
even include measures as potentials. Here, we follow the approach from   \cite{Stollmann-92,StollmannV-96}. Measure perturbations have been regarded
by a number of authors in different contexts, see e.g.
\cite{AlbeverioM-91,Hansen-99a,Sturm-94b} and the references there.

\medskip

We denote by
$\mathcal{M}_{R}(U)$ the signed Radon measures on the open subset $U$ of $X$
and by $\mathcal{M}_{R,0}(U)$ the subset of measures
$\nu$ that do not charge sets of capacity
$0$, i.e., those measures with $\nu(B)=0$ for every Borel set $B$ with
$\capp(B)=0$. In case that $\nu=\nu_+-\nu_-\in \mathcal{M}_{R,0}(X)$ we can define
$$
\nu[u,v]=\int_X\tilde{u}\overline{\tilde{v}}d\nu\mbox{  for  }u,v\in\DD
\mbox{  with  }\tilde{u},\tilde{v}\in L^2(X,\nu_++\nu_-) .
$$
Of course, a special instance of such  measures is given by $\nu=Vdm$ whenver $V$ belongs to  $ L^1_{\text{loc}}(X)$.

We
 have to rely
upon more restrictive assumptions concerning the negative part $\nu_-$
of our measure perturbation. We write $\mathcal{M}_{R,1}$ for those
measures $\nu\in \mathcal{M}_{R}(X)$ that are $\aaa$-bounded with bound less than one; i.e.
measures $\nu$ for which there is a $\kappa<1$ and a $c_\kappa\geq 0$ such that
$$
\nu[u,u]\le \kappa\aaa[u] + c_\kappa\| u\|^2 .
$$
The set $\mathcal{M}_{R,1}$ can easily be seen to be a subset of
$\mathcal{M}_{R,0}$.

By the KLMN theorem (see \cite{ReedS-75}, p. 167), the sum
$\aaa+\nu$ given  by $D(\aaa+\nu)=\{u\in\DD\mid \tilde{u}\in L^2(X,\nu_+)\}$
is closed and densely defined (in fact $\mathcal{D} \cap
C_c(X)\subset D(\aaa+\nu)$) for
$\nu$ with $\nu_+ \in \mathcal{M}_{R,0}, \nu_-\in \mathcal{M}_{R,1}$ .
We denote the associated selfadjoint operator by $H_0+\nu$.
Note that $\cD \cap L_c^\infty(X)\subset D(\cE+ \nu)$.

An important subclass of $\mathcal{M}_{R,0}$
with very nice properties of the associated operators is the
\textit{Kato class} and the \textit{extended Kato class}. In the present framework it can be defined in the
following way: For $\mu\in \mathcal{M}_0$ and $\alpha >0$ we set
$$
\Phi(\mu,\alpha): C_c(X)_+\to [0,\infty],$$
$$
\Phi(\mu,\alpha)\varphi := \int_X\left(
  (H_0+\alpha)^{-1}\varphi\right){\tilde{ }}\quad d\mu .
$$
The extended Kato class is defined as
$$
\hat{\mathcal{S}}_K:= \{\mu\in \mathcal{M}_0|\exists \alpha>0:
\Phi(\mu,\alpha)\in L^1(X,m)'\}
$$
and, for $\mu\in \hat{\mathcal{S}}_K$ and $\alpha >0$,
$$
c_\alpha(\mu):=\| \Phi(\mu,\alpha)\|_{L^\infty(X,m)}(= \|
\Phi(\mu,\alpha)\|_{L^1(X,m)'}), c_{Kato}(\mu):=\inf_{\alpha>0} c_{\alpha}(\mu)
.
$$
The Kato class is originally defined via the fundamental solution of the
Laplace equation in the classical case. In our setting it consists of those measures $\mu$ with $c_{Kato}(\mu)=0$.

As done in various papers, one can even allow for more singular measures, a direction we are not going to explore here.

As already discussed our measure perturbations preserve closability of the form. They preserve further properties.  In fact, regularity is preserved in our context as well.

\begin{theorem}[\cite{LenzSV}] Let $ (\aaa,\DD)$ be a strongly local, regular Dirichlet form. Let $\nu$ with $\nu_+ \in \mathcal{M}_{R,0}, \nu_-\in \mathcal{M}_{R,1}$ be given.
Then, the perturbed form
$(\cE+\nu,D(\cE+\nu))$ is regular as well.
\end{theorem}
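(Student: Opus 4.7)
The regularity assertion requires two density statements: that $D(\mathcal{E}+\nu) \cap C_c(X)$ is dense in $(C_c(X), \|\cdot\|_\infty)$ and that it is dense in $D(\mathcal{E}+\nu)$ equipped with the perturbed energy norm $\|u\|_{\mathcal{E}+\nu}^2 := (\mathcal{E}+\nu)[u] + \|u\|_{L^2}^2$. The first is essentially for free: the text already observed $\mathcal{D} \cap C_c(X) \subset D(\mathcal{E}+\nu)$, and regularity of the unperturbed form $\mathcal{E}$ gives that this subset is already dense in $C_c(X)$ in the sup norm. The real task is density with respect to $\|\cdot\|_{\mathcal{E}+\nu}$, and my strategy is a three-step reduction, at each stage producing approximants that are automatically controlled in the $\nu_+$-piece by dominated convergence (the $\nu_-$-piece being absorbed by the bound $\nu_-[v]\le\kappa\mathcal{E}[v]+c\|v\|^2$ with $\kappa<1$).

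\emph{Truncation.} For $u\in D(\mathcal{E}+\nu)$ apply a normal contraction $T_k$ bounding modulus at level $k$; then $u_k := T_k\circ u\in\mathcal{D}$, and by a standard application of the chain rule together with strong locality one has $u_k\to u$ in $(\mathcal{D},\|\cdot\|_\mathcal{E})$. Since $\nu_+\in\mathcal{M}_{R,0}$ does not charge polar sets, the quasi-continuous representatives $\tilde u_k$ converge $\nu_+$-a.e.{} to $\tilde u$, and $|\tilde u_k|\le|\tilde u|\in L^2(X,\nu_+)$ provides a dominant, so $\int|\tilde u_k-\tilde u|^2\,d\nu_+\to 0$. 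This reduces us to bounded $u\in D(\mathcal{E}+\nu)$.

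\emph{Localisation.} For such bounded $u$, take an exhausting sequence of cutoffs $\varphi_n\in\mathcal{D}\cap C_c(X)$ with $0\le\varphi_n\le 1$, $\varphi_n\to 1$ on compacts, and $d\Gamma(\varphi_n)\le C\,dm$, as furnished by Lemma \ref{cutoff}. Lemma \ref{localversion}(b) gives $\varphi_n u\in\mathcal{D}$, and the Leibniz rule together with Cauchy--Schwarz yields
\[
\mathcal{E}[\varphi_n u-u]\le 2\|\varphi_n-1\|_\infty^2\mathcal{E}[u]+2C\int|u|^2(1-\varphi_n)^2\,dm,
\]
which vanishes as $n\to\infty$; once more $|\varphi_n\tilde u|\le|\tilde u|$ handles the $\nu_+$-term by dominated convergence. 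We are reduced to approximating a bounded $u\in\mathcal{D}$ with compact support $K$.

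\emph{Continuous approximation.} Fix a compact neighbourhood $L$ of $K$ and a cutoff $\psi\in\mathcal{D}\cap C_c(X)$ with $\psi\equiv 1$ on $K$, $\operatorname{supp}\psi\subset L$, $d\Gamma(\psi)\le C\,dm$ (Lemma \ref{cutoff}). By regularity of $\mathcal{E}$ choose $w_n\in\mathcal{D}\cap C_c(X)$ with $w_n\to u$ in $\mathcal{E}$-norm, and set $u_n := T_{\|u\|_\infty}(\psi w_n)$, where $T_M$ is the obvious normal contraction bounding modulus at $M$. Then $u_n\in\mathcal{D}\cap C_c(X)$, $\operatorname{supp}u_n\subset L$, $|u_n|\le\|u\|_\infty$, and using $u=T_{\|u\|_\infty}(\psi u)$ together with the Leibniz rule, the chain rule for $\Gamma$, and continuity of normal contractions on $(\mathcal{D},\|\cdot\|_\mathcal{E})$, one verifies $u_n\to u$ in $\mathcal{E}$-norm. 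A subsequence converges quasi-everywhere, hence $\nu_+$-a.e.{}, and $|u_n-u|^2\le 4\|u\|_\infty^2\chi_L\in L^1(\nu_+)$ (since $\nu_+$ is Radon and $L$ is compact); dominated convergence closes the argument.

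The main obstacle is the third step: verifying that the combined effect of multiplying by $\psi$ and truncating by $T_{\|u\|_\infty}$ preserves $\mathcal{E}$-convergence, and securing a uniform compact support on the $u_n$ so that the Radon property of $\nu_+$ supplies the required $L^1(\nu_+)$-dominant. The first point is a careful but standard strong-locality/chain-rule calculation; the second is precisely why Lemma \ref{cutoff} is needed in the quantitative form stated there.
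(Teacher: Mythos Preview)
The paper records this theorem with a citation to \cite{LenzSV} and gives no proof of its own, so there is no in-text argument to compare against. Your three-step plan (truncate, localise, then approximate within a fixed compact support) is the standard route, and the first and third steps are sound.

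The localisation step, however, contains a genuine error. Your displayed bound
\[
\mathcal{E}[\varphi_n u-u]\le 2\|\varphi_n-1\|_\infty^2\,\mathcal{E}[u]+2C\int|u|^2(1-\varphi_n)^2\,dm
\]
does \emph{not} vanish: for noncompact $X$ every $\varphi_n\in C_c(X)$ satisfies $\|\varphi_n-1\|_\infty=1$, so the first term is identically $2\,\mathcal{E}[u]$. What Leibniz and Cauchy--Schwarz actually yield is
\[
\mathcal{E}[(\varphi_n-1)u]\;\le\;2\int(\varphi_n-1)^2\,d\Gamma(u)\;+\;2\int u^2\,d\Gamma(\varphi_n),
\]
and it is the first \emph{integral} (not its crude sup-norm majorant) that tends to zero, by dominated convergence against the finite measure $d\Gamma(u)$. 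For the second integral one must use that $d\Gamma(\varphi_n)$ vanishes on the open set where $\varphi_n\equiv 1$ (strong locality) together with a bound $d\Gamma(\varphi_n)\le C\,dm$ with $C$ \emph{uniform in $n$}. Note that Lemma~\ref{cutoff} as stated only delivers a constant depending on the compact set, and its construction via the intrinsic distance presupposes strict locality, which is not among the hypotheses of the theorem. You should either add that hypothesis or explain how uniform-energy cutoffs are obtained under strong locality alone; otherwise the passage from bounded $u$ to bounded compactly supported $u$ is not justified.
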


Measure perturbations also preserve irreducibility, as can be seen from the following result.

\begin{theorem}[\cite{LenzSV-pre}] Let  $(\aaa,\DD)$ be a strictly local, regular, irreducible  Dirichlet form. Let $\nu$ with $\nu_+ \in \mathcal{M}_{R,0}, \nu_-\in \mathcal{M}_{R,1}$ be given. Then, the perturbed form
$(\cE+\nu,D(\cE+\nu))$ is irreducible  as well.
 \end{theorem}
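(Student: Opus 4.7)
My plan is to apply the characterization of irreducibility from the theorem quoted earlier in the paper: the form $(\mathcal{E}+\nu, D(\mathcal{E}+\nu))$ is irreducible if and only if its semigroup $T_t^\nu := e^{-t(H_0+\nu)}$ is positivity improving for every $t>0$. Since $(\mathcal{E},\mathcal{D})$ is irreducible by hypothesis, $T_t := e^{-tH_0}$ is positivity improving. The task therefore reduces to transferring this property to $T_t^\nu$.

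The natural way to do this is to split $\nu = \nu_+ - \nu_-$ and proceed in two stages. First I introduce the intermediate form $\mathcal{E}_+ := \mathcal{E} + \nu_+$ with associated operator $H_+$ and semigroup $T_t^+$, and show $T_t^+$ is positivity improving. To do this, I approximate $\nu_+ \in \mathcal{M}_{R,0}$ from below by bounded potentials, e.g.\ $V_n \in L^\infty_c(X)$ with $V_n\,dm \uparrow \nu_+$ (using the cutoff machinery and the fact that $\nu_+$ does not charge polar sets). For each bounded $V_n$ the Trotter product formula
\[
e^{-t(H_0+V_n)} = \lim_{k\to\infty}\bigl(e^{-(t/k)H_0}\,e^{-(t/k)V_n}\bigr)^k
\]
applies, and every factor is positivity improving: $e^{-(t/k)H_0}$ by irreducibility of $\mathcal{E}$, and multiplication by $e^{-(t/k)V_n}>0$ a.e.\ is positivity preserving, so the composition is positivity improving. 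Passing $n\to\infty$ via monotone form convergence (Simon's theorem) yields $T_t^+$ as a strong limit of positivity improving semigroups on positive $f$, and one checks that a.e.-strict positivity is inherited by the monotone limit of such positive functions.

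The second stage treats $-\nu_-$, which is a form-bounded perturbation of $\mathcal{E}_+$ with relative bound less than one (since $\nu_-\in\mathcal{M}_{R,1}$ was $\mathcal{E}$-bounded with bound $<1$, hence a fortiori $\mathcal{E}_+$-bounded with bound $<1$). Because $\mathcal{E}_+ - \nu_- \leq \mathcal{E}_+$ as quadratic forms and both semigroups are positivity preserving, the standard semigroup domination principle gives
\[
T_t^\nu f \geq T_t^+ f \quad \text{a.e. for all } f\geq 0.
\]
One can justify this either by a Trotter-type argument using bounded truncations of $\nu_-$ (where $e^{s\nu_-^{(n)}}$ is positivity preserving) together with monotone convergence, or directly by the Feynman--Kac representation $T_t^\nu f(x) = E_x[e^{-A_t^\nu} f(X_t)]$ with the additive functional $A_t^\nu$ associated to $\nu$ via Revuz correspondence, noting that $A_t^{\nu_-}$ is a.s.\ finite by the $\mathcal{E}$-boundedness of $\nu_-$. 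Once the domination is in hand, positivity improving of $T_t^+$ immediately transfers to $T_t^\nu$, completing the proof.

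The main obstacle is the rigorous handling of the measure perturbation in the first stage: $\nu_+$ is only a measure that does not charge polar sets, not a function, and so the Trotter/monotone approximation requires some care to ensure the approximants $V_n\,dm$ converge to $\nu_+$ in the right sense (monotone form convergence, preserving all form-domain identifications). The Feynman--Kac/additive functional viewpoint sidesteps this elegantly, but staying purely within the form-theoretic framework requires checking Mosco convergence of the approximating forms and that strong resolvent convergence combined with monotonicity of the approximating semigroups suffices to conclude a.e.\ positivity of the limit on any nonzero positive $f$.
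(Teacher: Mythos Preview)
The paper does not contain a proof of this theorem; it is merely stated and attributed to the forthcoming work \cite{LenzSV-pre}. There is therefore no ``paper's own proof'' against which to compare your proposal.

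That said, your outline has a genuine gap in Stage~1. Adding $\nu_+$ makes the semigroup \emph{smaller}: one has $0\le T_t^{(n+1)}f \le T_t^{(n)}f$ and $T_t^+ f = \lim_n T_t^{(n)} f$ is a \emph{decreasing} limit of a.e.\ strictly positive functions. Such a limit need not be a.e.\ strictly positive (trivially, $1/n\downarrow 0$), so the sentence ``a.e.-strict positivity is inherited by the monotone limit'' is precisely the step that fails without further input. You correctly flag Stage~1 as the delicate part, but you locate the difficulty in approximating $\nu_+$ by bounded densities rather than in the non-preservation of the positivity-improving property under decreasing strong limits. Your Stage~2, by contrast, is fine: there the domination $T_t^\nu f \ge T_t^+ f$ goes in the right direction and transfers positivity improving immediately.

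The Feynman--Kac route you mention in passing is the one that actually closes the gap: writing $T_t^+ f(x)=E_x\bigl[e^{-A_t^{\nu_+}}f(X_t)\bigr]$ with the PCAF $A_t^{\nu_+}$ associated to $\nu_+\in\mathcal{M}_{R,0}$, one uses that $A_t^{\nu_+}<\infty$ $P_x$-a.s.\ (so $e^{-A_t^{\nu_+}}>0$ a.s.) together with the fact that irreducibility of $\mathcal{E}$ gives $P_x\{f(X_t)>0\}>0$ for a.e.\ $x$ whenever $f\ge 0$, $f\neq 0$; the product then has strictly positive expectation. If you want to stay purely form-theoretic, you need an additional argument replacing this probabilistic positivity, not merely Mosco/monotone convergence.
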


\section{Weak solutions} \label{Weak}
Our main aim is to relate properties of weak solutions or generalized eigenfunctions to spectral properties of $H_0 + \mu$. The necessary notation concerning weak solutions is introduced in this section.
Throughout this section we consider a strongly local, regular Dirichlet form,
$(\aaa,\DD)$ on $X$ and denote by $\Gamma: \DD_{\text{loc}}\times\DD_{\text{loc}}\to
\C{M}(X)$ the associated energy measure. We will be concerned with weak
solutions $\Phi$ of the equation
\begin{equation}\label{eq21}
(H_0+V)\Phi=\lambda\cdot \Phi,
\end{equation}
where $H_0$ is the operator associated with $\aaa$ and $V$ is a realvalued,
locally integrable potential. In fact, we will consider a somewhat more
general framework, allowing for measures instead of functions, as presented in
the previous section. Moreover, we stress the fact that (\ref{eq21}) is formal
in the sense that $\Phi$ is not assumed to be in the operator domain of
neither $H_0$ nor $V$. Here are the details.

\medskip

Recall that we could extend $\Gamma$ to a measure valued function on $U$.
In the same way, we can
extend $\nu[\cdot,\cdot]$, using that every $u\in \mathcal{D}_{\text{loc}}(U)$
admits a quasi continuous version $\tilde{u}$.

\begin{definition}
Let $U\subset X$ be open and $\nu\in\C{M}_{R,0}(U)$ be a signed Radon measure on $U$ that charges no set of capacity zero.
Let  $\lambda\in\RR$ and $\Phi\in L^2_{\text{loc}}(U)$. We say that $\Phi$ is a
\emph{weak solution} of $(H_0+\nu)\Phi=\lambda\cdot\Phi$ in $U$ if:
\begin{itemize}
\item[(i)] $\Phi\in\DD_{\text{loc}}(U)$,
\item[(ii)] $\tilde{\Phi}d\nu\in\C{M}_R(U)$,
\item[(iii)] $\forall \varphi\in\DD\cap C_c(U),$
$$
\aaa[\Phi,\varphi]+\int_U\varphi \tilde{\Phi}d\nu = \lambda \cdot (\Phi |\varphi) .
$$
\end{itemize}
If $V\in L^1_{\text{loc}}(U)$ we say that $\Phi$ is a
\emph{weak solution} of $(H_0+V)\Phi=\lambda\cdot\Phi$ in $U$ if it is a weak
solution of  $(H_0+\nu)\Phi=\lambda\cdot\Phi$ for $\nu=Vdm$.
\end{definition}

Next, we briefly discuss these assumptions.
\begin{remark}
\begin{itemize}
\item[(1)] If $\nu=Vdm$ and $V\in L^2_{\text{loc}}(U)$, then property (ii) of the Definition above is satisfied.
\item[(2)] If $\Phi\in L^\infty_{\text{loc}}(U)$ and $\nu\in\C{M}_R(U)$ then (ii) of the Definition above is satisfied.
\item[(3)] If $\nu\in\C{M}_R(U)$ satisfies (ii) above then $\nu - Edm\in\C{M}_R(U)$
satisfies (ii) as well and any weak solution of
$(H_0+\nu)\Phi=\lambda\cdot\Phi$ in $U$ is a weak solution of
$(H_0+\nu-Edm)\Phi=0$ in $U$. Thus it suffices to consider the case $\lambda=0$.
\item[(4)] By regularity we can replace (iii) by
$\aaa[\Phi,\varphi]+\int_U\varphi \tilde{\Phi}d\nu=\lambda \cdot (\Phi |\varphi)$
for all $\varphi \in \DD_{\text{loc}} \cap L^\infty_c (U)$ (see \cite{LenzSV} for details).
\end{itemize}
\end{remark}

\section{Positive weak solutions and the infimum of the spectrum} \label{Positive}
Throughout this section we consider a strongly local, regular Dirichlet form,
$(\aaa,\DD)$ on $X$ and denote by $\Gamma: \DD_{\text{loc}}\times\DD_{\text{loc}}\to
\C{M}(X)$ the associated energy measure. The results  discussed in this section are taken from \cite{LenzSV}  to which we refer for further details and proofs.

\subsection*{Ground state transform and consequences}

We start with a theorem giving the so called ground state transform in our general setting.

\begin{theorem}\label{t:transformation_global}
Let $(\aaa,\DD)$ be a regular, strictly local Dirichlet form, $H_0$ be the associated operator and
$\nu$ a measure with $\nu_+ \in \mathcal{M}_{R,0}, \nu_-\in \mathcal{M}_{R,1}$.  Suppose
that $\Phi$  is a weak solution  of $(H_0+\nu)\Phi=\lambda\cdot\Phi$ in $X$ with $\Phi>0$ $m$-a.e. and $\Phi, \Phi^{-1}\in L^\infty_{\text{loc}}(X)$.
Then, for   all $\varphi,\psi \in D(\cE + \nu)$, the products
 $\varphi \phim,\psi \phim$ belong to $\DD_{\text{loc}}$ and
 the formula
\begin{equation}\label{eq:formula}
\aaa[\varphi,\psi]+\nu[\varphi,\psi] = \int_X\Phi^2d\Gamma(\varphi\Phi^{-1},
\psi\Phi^{-1}) + \lambda\cdot (\varphi |\psi)
\end{equation}
holds.
\end{theorem}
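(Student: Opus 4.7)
The plan is to derive the identity first on the dense subspace $\DD \cap L^\infty_c(X)$ of $D(\cE+\nu)$ via a direct Leibniz-rule computation that uses $\Phi$ as a change of variable, and then to extend to $D(\cE+\nu)$ by continuity.

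First, since $\Phi \in \DD_{\text{loc}}$ (as a weak solution) and $\Phi, \Phi^{-1} \in L^\infty_{\text{loc}}(X)$, on every compact $K \subset X$ the function $\Phi$ takes values $m$-a.e.\ in some $[c, C] \subset (0, \infty)$. Choosing a $C^1$ function $\eta : \RR \to \RR$ that agrees with $z \mapsto 1/z$ on $[c, C]$, the chain rule for the energy measure gives $\Phi^{-1} \in \DD_{\text{loc}} \cap L^\infty_{\text{loc}}$ with $d\Gamma(\Phi^{-1}) = \Phi^{-4}\, d\Gamma(\Phi)$ locally. For $\varphi \in \DD \cap L^\infty_c(X)$, Lemma \ref{localversion}(a) with $\Psi = \Phi^{-1}$ then yields $\varphi\Phi^{-1} \in \DD \cap L^\infty_c(X)$. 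For a general $\varphi \in D(\cE+\nu) \subset \DD$ one combines cutoffs from Lemma \ref{cutoff} with truncation by normal contractions to conclude $\varphi\Phi^{-1} \in \DD_{\text{loc}}$.

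Next, fix real $\varphi, \psi \in \DD \cap L^\infty_c(X)$ and set $f := \varphi\Phi^{-1}$, $g := \psi\Phi^{-1}$. Expanding both $d\Gamma(f\Phi, g\Phi)$ and $d\Gamma(\Phi, fg\Phi)$ via the Leibniz rule and subtracting, the mixed terms $f\Phi\, d\Gamma(\Phi,g)$, $g\Phi\, d\Gamma(f,\Phi)$ and $fg\, d\Gamma(\Phi)$ cancel pairwise and one arrives at the clean identity
\begin{equation*}
d\Gamma(\varphi, \psi) \;=\; \Phi^2 \, d\Gamma(f, g) \;+\; d\Gamma(\Phi, fg\Phi).
\end{equation*}
Since $fg\Phi = \varphi\psi\Phi^{-1}$ lies in $\DD \cap L^\infty_c(X)$ by the previous step, it is an admissible test function in the weak equation for $\Phi$, so that, using $fg\Phi^2 = \varphi\psi$,
\begin{equation*}
\aaa[\Phi, fg\Phi] \;=\; \lambda(\Phi \mid fg\Phi) - \int_X fg\Phi \cdot \tilde\Phi \, d\nu \;=\; \lambda(\varphi \mid \psi) - \nu[\varphi, \psi].
\end{equation*}
Integrating the measure identity over $X$ and substituting yields \eqref{eq:formula} on $\DD \cap L^\infty_c(X)$.

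Finally, by regularity of the perturbed form $\cE+\nu$ (already established), the subspace $\DD \cap L^\infty_c(X)$ is dense in $D(\cE+\nu)$ for the graph norm $\|\cdot\|_{\cE+\nu}$, and both sides of \eqref{eq:formula} are separately continuous sesquilinear functionals in $\varphi, \psi$ for this norm. The identity therefore extends by continuity, and then by sesquilinearity to complex arguments. The hard part is not the algebraic computation but the justification that $\varphi\Phi^{-1} \in \DD_{\text{loc}}$ for \emph{every} $\varphi \in D(\cE+\nu)$ and the control of $\int_X \Phi^2 \, d\Gamma(\varphi\Phi^{-1}, \psi\Phi^{-1})$ along approximating sequences; both are addressed by bootstrapping from the identity already established on the dense subspace together with the Cauchy--Schwarz inequality for energy measures.
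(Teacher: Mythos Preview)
Your proposal is correct and follows essentially the same two-step strategy as the paper: first establish the identity for $\varphi,\psi\in\DD\cap L^\infty_c(X)$ by a Leibniz-rule computation, then extend to all of $D(\cE+\nu)$ by density. Your algebraic manipulation (expanding $d\Gamma(f\Phi,g\Phi)$ and $d\Gamma(\Phi,fg\Phi)$ and subtracting) is a minor reorganization of the paper's computation, which instead expands $\int\Phi^2\,d\Gamma(\varphi\Phi^{-1},\psi\Phi^{-1})$ directly and invokes the identity $\Phi^{-1}d\Gamma(w,\Phi)+\Phi\,d\Gamma(w,\Phi^{-1})=0$; both routes land on $\aaa[\varphi,\psi]-\aaa[\varphi\psi\Phi^{-1},\Phi]$ and then use the weak-solution property with test function $\varphi\psi\Phi^{-1}$.
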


The proof of the theorem proceeds essentially in two steps. In the first step a local version of the theorem is proven for 'smooth' $u,v$. In the second step this local version is then extended to the whole space. Note also that the conditions on $\Phi$ in the theorem imply that $\phim$ is in $\DD_{\text{loc}}$. As the local version may be of independent interest and has a very simple proof we include statement and proof next.

\begin{theorem}\label{t:transformation_local}
Let $(\aaa,\DD)$ be a regular, strictly local Dirichlet form, $H_0$ be the associated operator and $\nu\in\C{M}_{R,0}(U)$. Suppose
that $\Phi$  is a weak solution  of $(H_0+\nu)\Phi=\lambda\cdot\Phi$ in $U$ with $\Phi>0$ $m$-a.e. and $\Phi, \Phi^{-1}\in L^\infty_{\text{loc}}(U)$.
Then, for all $\varphi, \psi\in \DD\cap L^\infty_{c}(U)$:
\[
\aaa[\varphi,\psi]+\nu[\varphi,\psi] = \int_U\Phi^2d\Gamma(\varphi\Phi^{-1},
\psi\Phi^{-1}) + \lambda\cdot (\varphi |\psi) .
\]
\end{theorem}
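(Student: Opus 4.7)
The plan is to reduce everything to a Leibniz-rule computation, using the weak-solution equation for $\Phi$ as a test against a cleverly chosen cutoff. Set $f:=\varphi\Phi^{-1}$ and $g:=\psi\Phi^{-1}$. Since $\Phi\in\DD_{\text{loc}}(U)\cap L^\infty_{\text{loc}}(U)$ and $\Phi^{-1}\in L^\infty_{\text{loc}}(U)$, the chain rule applied to $\Phi$ (with $\zeta(t)=1/t$ on a neighbourhood of the range of $\Phi$ over a given compact) shows $\Phi^{-1}\in\DD_{\text{loc}}(U)\cap L^\infty_{\text{loc}}(U)$. Lemma~\ref{localversion}(a) then yields $f,g\in\DD\cap L^\infty_c(U)$, and likewise $\varphi\psi\Phi^{-1}\in\DD_{\text{loc}}(U)\cap L^\infty_c(U)$. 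In particular, $\varphi\psi\Phi^{-1}$ is an admissible test function for the weak equation by part~(4) of the Remark after the definition.

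Writing $\varphi=\Phi f$, $\psi=\Phi g$ and applying the Leibniz rule twice gives
\[
d\Gamma(\varphi,\psi)=\Phi^{2}\,d\Gamma(f,g)+\Phi f\,d\Gamma(\Phi,g)+\Phi g\,d\Gamma(\Phi,f)+fg\,d\Gamma(\Phi).
\]
The two middle terms recombine, using Leibniz in the second slot, as
\[
\varphi\,d\Gamma(\Phi,g)+\psi\,d\Gamma(\Phi,f)=\Phi\,d\Gamma(\Phi,fg)=d\Gamma(\Phi,\Phi fg)-fg\,d\Gamma(\Phi),
\]
so the unpleasant $fg\,d\Gamma(\Phi)$ contributions cancel and one is left with the clean identity
\[
d\Gamma(\varphi,\psi)=\Phi^{2}\,d\Gamma(f,g)+d\Gamma\bigl(\Phi,\varphi\psi\Phi^{-1}\bigr).
\]
Integrating over $U$ and feeding $\varphi\psi\Phi^{-1}$ into the weak-solution equation for $\Phi$ yields
\[
\aaa[\Phi,\varphi\psi\Phi^{-1}]=\lambda\,(\Phi\mid\varphi\psi\Phi^{-1})-\int_U\varphi\psi\Phi^{-1}\tilde\Phi\,d\nu=\lambda\,(\varphi\mid\psi)-\nu[\varphi,\psi],
\]
where we used $\tilde\Phi\cdot\Phi^{-1}=1$ quasi-everywhere. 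Substituting gives the claimed formula.

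The main obstacle is bookkeeping rather than a deep estimate: one must verify that $\Phi^{-1}$ and the products $f,g,\varphi\psi\Phi^{-1}$ sit in the correct spaces so that the Leibniz rule, the chain rule, and the weak equation (in its extended form) all legitimately apply. Once Lemma~\ref{localversion} is invoked to place these products in $\DD$ or $\DD_{\text{loc}}\cap L^\infty_c(U)$, the computation is purely algebraic and the cancellation above is automatic.
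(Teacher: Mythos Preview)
Your argument is correct and follows essentially the same route as the paper: a Leibniz-rule computation producing the identity $d\Gamma(\varphi,\psi)=\Phi^{2}\,d\Gamma(\varphi\Phi^{-1},\psi\Phi^{-1})+d\Gamma(\Phi,\varphi\psi\Phi^{-1})$, followed by testing the weak equation against $\varphi\psi\Phi^{-1}$. The only cosmetic differences are that the paper works from the right-hand side using the auxiliary identity $\Phi^{-1}d\Gamma(w,\Phi)+\Phi\,d\Gamma(w,\Phi^{-1})=0$ while you expand $d\Gamma(\Phi f,\Phi g)$ directly, and that the paper explicitly reduces to real-valued $\varphi,\psi$ first (which you should also do, since the second-slot Leibniz step acquires conjugates in the complex case).
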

\begin{proof} For the proof we may assume $\lambda=0$ without restriction.  Without loss of generality we may also assume that $\varphi$ and $\psi$ are real valued functions.  We now  evaluate the RHS of the above equation, using the following
identity. The Leibniz rule implies that  for arbitrary $w\in \DD_{\text{loc}}(U)$:
$$
0=d\Gamma(w,1)=d\Gamma(w,\Phi\Phi^{-1})=\Phi^{-1}d\Gamma(w,\Phi)+\Phi d\Gamma(w,\Phi^{-1})\qquad( \bigstar)
$$
Therefore, for $\varphi, \psi\in \DD\cap C_c(X)$:
\begin{eqnarray*}
\int_X\Phi^2d\Gamma(\varphi\Phi^{-1},
\psi\Phi^{-1}) &=& \int_X\Phi d\Gamma(\varphi,\psi\Phi^{-1})+
\int_X\Phi^2\varphi d\Gamma(\Phi^{-1},\psi\Phi^{-1})\\
(\mbox{by symmetry})\;\:&=& \int_X d\Gamma(\varphi,\psi)+\int_X\Phi \psi d\Gamma(\varphi,\Phi^{-1})\\
& & + \int_X\Phi^2\varphi d\Gamma(\psi\Phi^{-1},\Phi^{-1})\\
&=& \aaa[\varphi,\psi] + \int_X\Phi^2 d\Gamma(\varphi\psi\Phi^{-1},\Phi^{-1})\\
(\mbox{ by $(\bigstar)$})\;\:&=& \aaa[\varphi,\psi]- \int_X d\Gamma(\varphi\psi\Phi^{-1},\Phi)\\
 &=& \aaa[\varphi,\psi]-\aaa[\varphi\psi\Phi^{-1},\Phi].
 \end{eqnarray*}
 As $\Phi$ is a weak solution we can now use part (4) of the previous remark to continue the computation by
 \begin{eqnarray*}
...&=& \aaa[\varphi,\psi]-\left( -\nu[\varphi\psi\Phi^{-1},\Phi] \right)\\
 &=& \aaa[\varphi,\psi]+\nu[\varphi,\psi] .
\end{eqnarray*}
This finishes the proof.
\end{proof}

The ideas behind our proof allow for some further generalizations. This is shortly indicated in the following remark.
textbf{Remark.}
\begin{itemize}
\item[(1)] This proof actually shows the following statement: Assume that there is a weak supersolution $\Phi$ of $(H_0+\nu)\Phi=\lambda\cdot\Phi$ on $X$ with
$\Phi>0$ $m$-a.e. and $\Phi,\Phi^{-1}\in L^\infty_{\text{loc}} (X)$. Then
$\aaa +\nu \ge \lambda$.
\item[(2)] We can allow for complex measures $\nu$ without problems. In the context of PT--symmetric
  operators there is recent interest in this type of Schr\"{o}dinger
  operators, see \cite{BenderBM-99}
\item[(3)] Instead of measures also certain distributions could be included. Cf \cite{HerbstS-78} for such singular perturbations.
\end{itemize}

We explicitely note the following immediate consequence of (both) of the theorems of this section.

\begin{corollary}\label{AP-leichteRichtung} Let $(\aaa,\DD)$ be a regular, strictly local Dirichlet form, $H_0$ be the associated operator and $\nu$ a measure on $X$ with $\nu_+ \in \mathcal{M}_{R,0}, \nu_-\in \mathcal{M}_{R,1}$.  Suppose
that $\Phi$  is a weak solution  of $(H_0+\nu)\Phi=\lambda\cdot\Phi$ in $X$ with $\Phi>0$ $m$-a.e. and $\Phi, \Phi^{-1}\in L^\infty_{\text{loc}}(X)$. Then, $H_0 + \nu\geq \lambda$.
\end{corollary}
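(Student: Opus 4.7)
The plan is to deduce the corollary directly from the ground state transform formula of Theorem~\ref{t:transformation_global}. The hypotheses of the corollary coincide exactly with those of that theorem, so the identity
\[
\aaa[\varphi,\psi]+\nu[\varphi,\psi] = \int_X\Phi^2\,d\Gamma(\varphi\Phi^{-1},\psi\Phi^{-1}) + \lambda\cdot(\varphi\mid\psi)
\]
is at our disposal for every $\varphi,\psi\in D(\cE+\nu)$.

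First I would specialize to $\varphi=\psi$. Since $\Gamma(u,u)$ is by construction a \emph{non-negative} Radon measure (the energy measure of a Dirichlet form is positive semidefinite, in particular $d\Gamma(u):=d\Gamma(u,u)\ge 0$), and $\Phi^{2}\ge 0$, the first term on the right hand side,
\[
\int_X\Phi^{2}\,d\Gamma(\varphi\Phi^{-1}),
\]
is non-negative. Here there is a small point to check: the formula is asserted in Theorem~\ref{t:transformation_global} under the hypothesis that $\varphi\Phi^{-1}\in \DD_{\text{loc}}$, which is guaranteed by the hypothesis $\Phi,\Phi^{-1}\in L^\infty_{\text{loc}}(X)$ combined with Lemma~\ref{localversion}; so the right hand side makes sense as written.

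Discarding this non-negative term yields
\[
(\aaa+\nu)[\varphi,\varphi]\;\ge\;\lambda\,\|\varphi\|_{L^2(X,m)}^{2}\qquad\text{for all }\varphi\in D(\cE+\nu),
\]
which is precisely the quadratic form inequality $\aaa+\nu\ge\lambda$; equivalently $H_0+\nu\ge\lambda$ as self-adjoint operators. I do not anticipate any serious obstacle here: the only subtlety is verifying that the substitution $\varphi=\psi$ is legitimate for every $\varphi\in D(\cE+\nu)$ and that the right hand integral is well defined and non-negative, both of which are immediate from Theorem~\ref{t:transformation_global} and the positivity of $\Gamma(u,u)$. The result is therefore an essentially one-line consequence of the ground state transform, once that transform is available.
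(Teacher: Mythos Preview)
Your proposal is correct and follows exactly the approach the paper has in mind: the authors state the corollary as ``an immediate consequence'' of Theorems~\ref{t:transformation_global} and~\ref{t:transformation_local}, and your argument---specializing $\varphi=\psi$ in the ground state transform and dropping the manifestly non-negative term $\int_X\Phi^2\,d\Gamma(\varphi\Phi^{-1})$---is precisely that immediate deduction. One minor remark: the fact that $\varphi\Phi^{-1}\in\DD_{\text{loc}}$ is already part of the \emph{conclusion} of Theorem~\ref{t:transformation_global}, not a hypothesis you need to verify separately.
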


\subsection*{Harnack principles and existence of positive solutions below the spectrum}
The previous subsection shows that  $H_0+\nu\ge \lambda$ whenever
$\aaa+\nu$ is closable and admits a positive weak solution of
$(H_0+\nu)\Phi=\lambda\Phi$. In this subsection discuss the  converse under suitable
conditions.
A key property is related to the celebrated \emph{Harnack inequality}.
\begin{definition}\label{d:Harnack}
{\rm
\begin{itemize}
\item [(1)] We say that $H_0+\nu$ \emph{satisfies a Harnack inequality} for
  $\lambda\in\RR$ if, for every relatively compact, connected open $X_0\subset X$ there is a
  constant $C$ such that all positive weak solutions $\Phi$ of
  $(H_0+\nu)\Phi=\lambda\Phi$ on $X_0$ are locally bounded and satisfy
$$
\mbox{esssup}_{B(x,r)}u\le C  \mbox{essinf}_{B(x,r)}u ,
$$
for every $B(x,r)\subset X_0$ where esssup and essinf denote the essential supremum and infimum.
\item [(2)] We say that $H_0+\nu$ satisfies the \emph{Harnack principle} for
  $\lambda \in\RR$ if for every relatively compact, connected open subset $U$ of $X$
  and every sequence $(\Phi_n)_{n\in\NN}$ of nonnegative solutions of
  $(H_0+\nu)\Phi=\lambda\cdot\Phi$ in $U$ the following implication holds: If, for
  some measurable subset $A \subset U$ of positive measure
$$
\sup_{n\in\NN}\|\Phi_n{\mathds{1}}_A\|_2 <\infty
$$
then, for all compact $K\subset U$ also
$$
\sup_{n\in\NN}\| \Phi_n{\mathds{1}}_K\|_2 <\infty .
$$
\item [(3)]
 We say that $H_0+\nu$ satisfies the \emph{uniform Harnack principle}
 if for every bounded intervall $I\subset\RR$,
 every relatively compact, connected open subset $U$ of $X$ and every sequence $(\Phi_n)_{n\in\NN}$ of nonnegative solutions of $(H_0+\nu)\Phi=\lambda_n\cdot\Phi$ in $U$ with $\lambda_n\in I$ the following implication holds:
If, for some measurable subset $A \subset U$ of positive measure
$$
\sup_{n\in\NN}\|\Phi_n{\mathds{1}}_A\|_2 <\infty
$$
then, for all compact $K\subset U$ also
$$
\sup_{n\in\NN}\| \Phi_n{\mathds{1}}_K\|_2 <\infty .
$$
\end{itemize}
}
\end{definition}
Note that validity of a Harnack principle implies that a nonnegative
weak solution $\varPhi$ must vanish identically if it vanishes on a
set of positive measure (as $\varPhi_n := n \varPhi$ has vanishing
$L^2$ norm on the set of positive measure in question).  Note also
that validity of an Harnack inequality extends from balls to compact
sets by a standard chain of balls argument.  This easily shows that
$H_0+\nu$ satisfies the Harnack principle for $\lambda\in\RR$ if it obeys a
Harnack inequality for $\lambda\in\RR$.  Therefore, many situations are
known in which the Harnack principle is satisfied:

For $\nu\equiv 0$ and $\lambda=0$ a Harnack inequality holds, whenever $\aaa$ satisfies a Poincar\'e and a volume doubling property; cf \cite{BiroliM-95} and the discussion there. The most general results for $H_0=-\Delta$ in terms of the measures
  $\nu$ that are allowed seem to be found in \cite{Hansen-99a}. The uniformity
 of the estimates from \cite{Hansen-99a} immediately gives that the
  uniform Harnack principle is satisfied for Kato class measures.
   Of the enormous
  list of papers on Harnack's inequality, let us  also mention
  \cite{AizenmanS-82,Biroli-01,BiroliM-06, ChiarenzaFG-86,Hansen-99a, HoffmannHN-95, Kassmann-07,
    Moser-61,Saloff-Coste-95, Serrin-64,Sturm-94c,Sturm-96}

Apart from the Harnack principle there is a second property that will be
important in the proof of existence of positive general eigensolutions at
energies below the spectrum.

\begin{definition}
The form  $\aaa$ satisfies the \emph{local compactness property} if
$D_0(U):=\overline{D\cap C_c(U)}^{\|\cdot\|_\aaa}$ is compactly
embedded in $L^2(X)$ for every relatively compact open $U\subset X$.
\end{definition}
In case of the classical Dirichlet form the local compactness property  follows from Rellich's
Theorem on compactness of the embedding of Sobolev spaces in $L^2$.

\smallskip

It turns out that the situation is somewhat different depending on whether $X$ is compact or not.
 In both cases we will need the assumption of irreducibility in order to obtain solutions which are positive almost everywhere. This is clear as in the reducible case a nontrivial  solution could still  vanish on some 'components'.

  We first get the case of compact $X$ out of our way.

\begin{theorem}\label{AP-schwereRichtung-compact}
  Let $(\aaa,\DD)$ be a regular, strictly local, irreducible Dirichlet
  form, $H_0$ be the associated operator and
  $\nu$ a measure with $\nu_+ \in \mathcal{M}_{R,0}, \nu_-\in \mathcal{M}_{R,1}$.  Suppose that $X$ is compact and  $\aaa$ satisfies the
  local compactness property. Then, $H_0 + \nu$ has compact resolvent. In particular,   there exists a positive weak solution
  to $(H_0 + \nu) \Phi = \lambda_0 \Phi$ for $\lambda_0:=\inf \sigma (H_0 +\nu)$.  This solution is unique (up to a factor) and  belongs to $L^2
  (X)$. If $H_0 + \nu$ satisfies a Harnack principle, then  $\lambda_0$ is the only value in $\RR$ allowing for a positive  weak solution.
\end{theorem}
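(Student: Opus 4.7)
My plan is to proceed in three stages: establish compactness of the resolvent, produce a positive ground state through Perron--Frobenius, and prove uniqueness of $\lambda_0$ by an orthogonality argument combined with Harnack.

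First I would handle the compact resolvent. Since $X$ is compact, taking $K=X$ in the definition of $\DD_{\text{loc}}(X)$ shows $\DD_{\text{loc}}(X)=\DD$, and every $u\in\DD$ has compact support so $D_0(X)=\DD$. The local compactness property then gives that $(\DD,\|\cdot\|_\aaa)\hookrightarrow L^2(X)$ is compact. Because $\nu_-\in\mathcal{M}_{R,1}$ is $\aaa$-bounded with bound $\kappa<1$, the KLMN estimate yields constants $a,b>0$ with
\[
\aaa[u]\leq a\,(\aaa+\nu)[u]+b\|u\|^2\qquad(u\in D(\aaa+\nu)),
\]
so $D(\aaa+\nu)$ in its own form norm embeds continuously into $\DD$ and hence compactly into $L^2(X)$. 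This forces $H_0+\nu$ to have compact resolvent.

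Next I would obtain the ground state. Compactness of the resolvent makes $\sigma(H_0+\nu)$ a discrete sequence of eigenvalues of finite multiplicity, so $\lambda_0:=\inf\sigma(H_0+\nu)$ is itself an eigenvalue. By the preservation of irreducibility under measure perturbations (Section~\ref{Measure}) together with the equivalence of irreducibility with positivity-improvement of the semigroup, $e^{-t(H_0+\nu)}$ is positivity improving for every $t>0$. The classical Perron--Frobenius argument (cf.\ \cite{ReedS-78}, Thm.~XIII.43) then yields that $\lambda_0$ is simple, with an eigenfunction $\Phi$ strictly positive $m$-a.e.\ and in $L^2(X)$. I would then verify that $\Phi$ meets the three requirements of weak solution: (i) holds since $\Phi\in\DD=\DD_{\text{loc}}(X)$; (ii) holds because $\tilde\Phi\in L^2(X,\nu_++\nu_-)$ and $|\nu|$ is finite on the compact space $X$, making $\tilde\Phi\,d\nu$ a finite signed Radon measure; (iii) is the operator eigenvalue equation tested against $\varphi\in\DD\cap C(X)\subset D(\aaa+\nu)$.

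Finally, assuming the Harnack principle, I would prove that $\lambda_0$ is the only energy admitting a positive weak solution. Given such a solution $\Phi$ at energy $\lambda$, a Harnack-type local boundedness estimate places $\Phi\in L^\infty(X)\subset L^2(X)$; together with (ii) this certifies $\Phi\in D(\aaa+\nu)$, and (iii) then upgrades $\Phi$ to an honest eigenfunction of $H_0+\nu$ at eigenvalue $\lambda$. Since both $\Phi$ and the ground state $\Phi_0$ are strictly positive $m$-a.e.,
\[
(\Phi\,|\,\Phi_0)_{L^2(X,m)}>0,
\]
while eigenfunctions of the self-adjoint operator $H_0+\nu$ at distinct eigenvalues are orthogonal. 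Hence $\lambda=\lambda_0$. The hardest point will be this final upgrade from weak solution to $L^2$-eigenfunction: Definition~\ref{d:Harnack} states the Harnack principle as a sequential compactness property rather than as a pointwise local bound, so the argument really needs the stronger Harnack inequality (from which the principle is derived in every relevant example) in order to guarantee $\Phi\in D(\aaa+\nu)$ and make sense of $\int\varphi\tilde\Phi\,d\nu$ on a form core.
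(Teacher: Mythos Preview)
The paper states this theorem without proof, referring to \cite{LenzSV}, so there is no in-text argument to compare against. Your first two stages are sound: on compact $X$ regularity gives $D_0(X)=\DD$, the KLMN bound transfers the compact embedding to $D(\aaa+\nu)$ and hence yields compact resolvent, and Perron--Frobenius for the positivity-improving semigroup (irreducibility being preserved under the measure perturbation, as recorded in Section~\ref{Measure}) produces the simple, strictly positive $L^2$ ground state. The verification of the weak-solution conditions for $\Phi$ is also fine.

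Your self-diagnosed gap in the third stage is genuine. The Harnack \emph{principle} of Definition~\ref{d:Harnack}(2) is an $L^2$-compactness statement for \emph{sequences} of nonnegative solutions; applied to the constant sequence $\Phi_n=\Phi$ on the compact space $X$ it is vacuous beyond forcing $\Phi>0$ a.e.\ (via the $n\Phi$ observation made after the definition), and in particular it does not put $\Phi$ into $L^\infty$. The upgrade from weak solution to honest eigenfunction in $D(\aaa+\nu)$ therefore really needs the Harnack \emph{inequality} of Definition~\ref{d:Harnack}(1), which on compact $X$ gives $\Phi,\Phi^{-1}\in L^\infty_{\text{loc}}=L^\infty(X)$. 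Once that is available your orthogonality argument goes through; alternatively, and perhaps more in the spirit of the section, Corollary~\ref{AP-leichteRichtung} then yields $\lambda\le\lambda_0$ directly, while $\Phi\in\DD\cap L^\infty(X)$ can be used as its own test function (Remark~(4) after the definition of weak solution) to obtain $(\aaa+\nu)[\Phi]=\lambda\|\Phi\|^2$ and hence $\lambda\ge\lambda_0$. Either route requires the Harnack inequality rather than the weaker Harnack principle as literally formulated; this is likely a harmless looseness in the theorem's wording, since in every instance cited in the paper the principle is obtained from the inequality.
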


We can now state our result in the case of non-compact $X$.

\begin{theorem}\label{AP-schwereRichtung-non-compact}
  Let $(\aaa,\DD)$ be a regular, strictly local, irreducible Dirichlet
  form, $H_0$ be the associated operator and
  $\nu$ with $\nu_+ \in \mathcal{M}_{R,0}, \nu_-\in \mathcal{M}_{R,1}$.  Suppose that $\aaa$ satisfies the
  local compactness property and $X$ is noncompact. Then, if
  $\lambda<\inf\sigma(H_0+\nu)$ and $H_0+\nu$ satisfies the Harnack
  principle for $\lambda $, there is an a.e. positive solution of
  $(H_0+\nu)\Phi=\lambda \Phi$.
\end{theorem}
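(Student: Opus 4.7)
The plan is to produce a non-negative weak solution on $X$ as a limit of non-negative functions that solve the eigenvalue equation on larger and larger subdomains, where the source term preventing them from being exact solutions is pushed out to infinity. The limit is extracted with the help of the Harnack principle, and the local compactness property is used twice: once to obtain Caccioppoli-type estimates, and once for a compactness argument along a diagonal subsequence.

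Concretely, I would fix an exhaustion $U_1 \Subset U_2 \Subset \cdots$ of $X$ by relatively compact, connected open sets with $\bigcup_n U_n = X$. Since $X$ is noncompact and locally compact, each $X\setminus\overline{U_n}$ contains a compact set of positive measure; Lemma~\ref{cutoff} then produces non-negative non-zero cutoffs $\eta_n \in \mathcal{D} \cap C_c(X)$ with $\supp \eta_n \subset X \setminus \overline{U_n}$. Since $\lambda < \inf\sigma(H_0+\nu)$, the resolvent $R := (H_0+\nu-\lambda)^{-1}$ is bounded on $L^2(X,m)$; since measure perturbations preserve irreducibility (corresponding theorem of Section~\ref{Measure}), the equivalence $(\text{i})\Leftrightarrow(\text{iii})$ of the irreducibility theorem in Section~\ref{Strongly} implies that $R$ is positivity improving. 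Setting $\Phi_n := R\eta_n$ one obtains $\Phi_n > 0$ a.e., $\Phi_n \in L^2(X)$, and $(H_0+\nu-\lambda)\Phi_n = \eta_n$ weakly on $X$; since $\eta_n$ vanishes on $U_n$, $\Phi_n$ is a non-negative weak solution of $(H_0+\nu)\Phi = \lambda\Phi$ on the whole open set $U_n$.

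Next I would fix $A \subset U_1$ measurable with $0 < m(A) < \infty$ and normalize $\Psi_n := \Phi_n / \|\Phi_n\|_{L^2(A)}$, so that $\|\Psi_n\|_{L^2(A)} = 1$. For any compact $K \subset X$, embedding $A\cup K$ into a relatively compact, connected open $U$ with $\overline{U} \subset U_n$ for $n$ large, the Harnack principle at $\lambda$ gives $\sup_n \|\Psi_n \mathbf{1}_K\|_2 < \infty$. To upgrade these $L^2_{\text{loc}}$-bounds to $\mathcal{D}_{\text{loc}}$-bounds I would run a Caccioppoli-type argument: using cutoffs $\chi \in \mathcal{D} \cap C_c(X)$ provided by Lemma~\ref{cutoff} with $d\Gamma(\chi) \le C\, dm$, the product $\chi^2 \Psi_n$ lies in $\mathcal{D}$ (by iterated application of Lemma~\ref{localversion}(b)) and so is a legitimate test function. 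Inserting it into the weak equation and expanding via the Leibniz rule, applying the Cauchy--Schwarz inequality for $\Gamma$ and the $\mathcal{E}$-boundedness of $\nu_-$, one obtains $\int \chi^2\, d\Gamma(\Psi_n) \le C_\chi \|\Psi_n\|_{L^2(\supp\chi)}^2$ with $C_\chi$ independent of $n$. Hence $\chi\Psi_n$ is bounded in $(\mathcal{D}, \|\cdot\|_\mathcal{E})$ for each admissible cutoff.

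A diagonal extraction based on the local compactness property, with a countable cutoff family exhausting $X$, then yields a subsequence along which $\Psi_n \to \Psi$ in $L^2_{\text{loc}}(X)$, with $\Psi \in \mathcal{D}_{\text{loc}}(X)$, and $\chi \Psi_n \rightharpoonup \chi \Psi$ weakly in $(\mathcal{D}, \|\cdot\|_\mathcal{E})$ for every admissible cutoff $\chi$. For any test function $\varphi \in \mathcal{D} \cap C_c(X)$, selecting $\chi \equiv 1$ on $\supp\varphi$ with $\supp\chi \subset U_n$ for large $n$, the identity $\mathcal{E}[\Psi_n, \varphi] = \mathcal{E}[\chi\Psi_n, \varphi]$ from strong locality together with the convergences above let me pass to the limit in the weak equation and identify $\Psi$ as a non-negative weak solution of $(H_0+\nu)\Phi = \lambda\Phi$ on $X$. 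Nontriviality $\|\Psi\|_{L^2(A)} = 1$ combined with the remark following Definition~\ref{d:Harnack} then forces $\Psi > 0$ a.e. The main technical obstacle is the Caccioppoli step, in which one must absorb $\nu_-$ contributions via its form-bound $\kappa<1$ and control $\nu_+$ contributions through quasi-continuous representatives while keeping all constants uniform in $n$; a secondary difficulty is making the passage to the limit in the $\nu$-term rigorous, for which weak $\mathcal{D}$-convergence of $\chi\Psi_n$ together with the $\mathcal{E}$-boundedness of $\nu_-$ and the basic properties of $\mathcal{M}_{R,0}$-measures suffice.
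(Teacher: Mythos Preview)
The paper does not give a proof here; it states the theorem and refers to \cite{LenzSV} for details. Your construction---take $\Phi_n=(H_0+\nu-\lambda)^{-1}\eta_n$ with $\eta_n\ge 0$ supported outside an exhaustion $\overline{U_n}$, normalize on a fixed set $A\subset U_1$, invoke the Harnack principle for uniform $L^2_{\text{loc}}$-bounds, run a Caccioppoli argument for $\mathcal{D}_{\text{loc}}$-bounds, and extract a limit via the local compactness property and a diagonal procedure---is exactly the Agmon-type scheme carried out in \cite{LenzSV}, and your identification of the two technical sticking points (absorbing $\nu_-$ via its form bound in the Caccioppoli step, and passing to the limit in the $\nu$-term) is accurate.
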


  That we have to assume that $X$ is noncompact can easily be seen by
  looking at the Laplacian on a compact manifold. In that situation
  any positive weak solution must in fact be in $L^2$ due to the
  Harnack principle. Thus the corresponding energy must lie in the
  spectrum (see Theorem \ref{AP-schwereRichtung-compact}).

\subsection*{Characterizing the infimum of the spectrum}
The previous results to not yet settle the existence of a positive weak solution for the groundstate energy $\inf \sigma (H_0+\nu)$ in the noncompact case. The uniform Harnack principle settles this question:

\begin{theorem}\label{Characterization-AP} Let $(\aaa,\DD)$ be a regular, strictly local, irreducible Dirichlet form, $H_0$ be the associated operator,
$\nu$ with $\nu_+ \in \mathcal{M}_{R,0}, \nu_-\in \mathcal{M}_{R,1}$.
  Suppose that $\aaa$ satisfies the local compactness property and $H_0+\nu$ satisfies the uniform Harnack principle. Then there is an a.e. positive weak
solution of $(H_0+\nu)\Phi=\lambda \Phi$ for  $\lambda = \inf \sigma (H_0+\nu)$.
\end{theorem}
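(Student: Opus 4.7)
The plan is to realize $\Phi$ as a limit of positive weak solutions $\Phi_n$ at energies $\lambda_n$ strictly below the ground state, obtained from Theorem \ref{AP-schwereRichtung-non-compact}, and to use the uniform Harnack principle together with local compactness to extract a convergent subsequence. If $X$ is compact the claim is already contained in Theorem \ref{AP-schwereRichtung-compact}, so assume henceforth that $X$ is non-compact. Set $\lambda_0:=\inf\sigma(H_0+\nu)$ and pick $\lambda_n\uparrow\lambda_0$ with $\lambda_n<\lambda_0$. The uniform Harnack principle is in particular a Harnack principle for each $\lambda_n$, so Theorem \ref{AP-schwereRichtung-non-compact} produces a.e. positive weak solutions $\Phi_n$ of $(H_0+\nu)\Phi_n=\lambda_n\Phi_n$.

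The first step is a normalization. Fix $x_0\in X$ and choose $r>0$ so that $B_0:=B(x_0,r)$ has positive finite measure (possible because the intrinsic metric induces the topology). Rescale each $\Phi_n$ so that $\|\Phi_n\mathds{1}_{B_0}\|_2=1$. Because the $\lambda_n$ lie in the bounded interval $[\lambda_0-1,\lambda_0]$, the uniform Harnack principle, combined with a chain-of-balls argument (exploiting that strict locality forces $X$ to be connected), yields
$$\sup_{n\in\NN}\|\Phi_n\mathds{1}_K\|_2<\infty\qquad\text{for every compact }K\subset X.$$

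The next step is to upgrade this $L^2_{\text{loc}}$ bound to a $\DD_{\text{loc}}$ bound by a Caccioppoli-type estimate. Given a compact $K$, Lemma \ref{cutoff} provides $\eta\in\DD\cap C_c(X)$ with $\eta\equiv 1$ on $K$, $0\le\eta\le 1$, and $d\Gamma(\eta)\le C\,dm$; Lemma \ref{localversion}(b) then ensures $\eta^2\Phi_n\in\DD$, making it an admissible test function in the weak equation (cf.\ item (4) of the remark after the Definition of weak solutions). Using the Leibniz rule $d\Gamma(\Phi_n,\eta^2\Phi_n)=\eta^2 d\Gamma(\Phi_n)+2\eta\Phi_n\,d\Gamma(\Phi_n,\eta)$, applying the Cauchy--Schwarz inequality for the energy measure to absorb the cross term, and invoking the $\aaa$-boundedness (with bound $<1$) of $\nu_-$ to control the $\nu$-contribution, one arrives at
$$\int\eta^2\,d\Gamma(\Phi_n)\le C'\Bigl(\int_{\supp\eta}\Phi_n^2\,dm+1\Bigr).$$
Combined with the previous step this gives $\sup_n\int_K d\Gamma(\Phi_n)<\infty$, so each sequence $(\eta\Phi_n)$ is bounded in $\DD$. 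The local compactness property then allows a diagonal extraction so that, after passing to a subsequence, $\Phi_n\to\Phi$ in $L^2_{\text{loc}}(X)$ with $\Phi\in\DD_{\text{loc}}$ and $\eta\Phi_n\rightharpoonup\eta\Phi$ weakly in $\DD$ for every fixed cut-off $\eta$.

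Finally, I would pass to the limit in the weak equation and verify positivity. For $\varphi\in\DD\cap C_c(X)$, choose $\eta\equiv 1$ on $\supp\varphi$; strong locality yields $\aaa[\Phi_n,\varphi]=\aaa[\eta\Phi_n,\varphi]\to\aaa[\Phi,\varphi]$ by weak convergence, and $\lambda_n(\Phi_n|\varphi)\to\lambda_0(\Phi|\varphi)$ by local $L^2$ convergence. Hence $\Phi$ is a weak solution of $(H_0+\nu)\Phi=\lambda_0\Phi$. The normalization $\|\Phi_n\mathds{1}_{B_0}\|_2=1$ passes to the limit, so $\Phi\not\equiv 0$, and $\Phi\ge 0$ since each $\Phi_n\ge 0$; the remark after Definition \ref{d:Harnack}, applied via the Harnack principle at $\lambda_0$ embedded in the uniform principle, then forces $\Phi>0$ a.e. The main obstacle I anticipate is the passage to the limit in the measure term $\int\varphi\widetilde{\Phi_n}\,d\nu$: since $\nu$ acts on quasi-continuous representatives, one must argue that the chosen representatives converge in a capacity-theoretic sense (e.g.\ quasi-uniformly on compacta), exploiting that $\nu_+\in\mathcal{M}_{R,0}$ does not charge polar sets and that Cauchy sequences in $\DD$ admit q.e.\ convergent subsequences; the relative form-boundedness of $\nu_-$ keeps this term dominated by the already-controlled energy.
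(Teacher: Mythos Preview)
The paper does not actually prove this theorem; Section~\ref{Positive} states it and refers to \cite{LenzSV} for the argument. Your strategy---approximate $\lambda_0$ from below by $\lambda_n$, obtain positive solutions $\Phi_n$ from Theorem~\ref{AP-schwereRichtung-non-compact}, normalize on a fixed ball, invoke the \emph{uniform} Harnack principle for an $L^2_{\text{loc}}$-bound independent of $n$, upgrade via a Caccioppoli estimate (which is Theorem~\ref{Caccioppoli} here) to local energy bounds, and extract a limit through the local compactness property---is exactly the natural argument and is precisely what the uniform Harnack principle is introduced for, as the paper's discussion preceding the theorem makes clear. Structurally your proposal is correct and is the intended route.

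You have also correctly isolated the one genuinely delicate step, the passage to the limit in $\int\varphi\widetilde{\Phi_n}\,d\nu$. Your plan via q.e.\ convergence and $\nu_+\in\mathcal{M}_{R,0}$ is the right idea, but be aware that weak $\DD$-convergence of $\eta\Phi_n$ by itself does not give q.e.\ convergence; an extra step is needed. One way to close it: the Caccioppoli computation, where you test against $\eta^2\Phi_n$, simultaneously yields a uniform bound on $\int\eta^2\widetilde{\Phi_n}^{\,2}\,d\nu_+$, so $(\eta\Phi_n)$ is in fact bounded in the Hilbert space $D(\aaa+\nu_+)$, and weak convergence there handles the $\aaa$-term and the $\nu_+$-term at once; the $\nu_-$-term is controlled by form-boundedness as you note. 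This also disposes of the verification of condition~(ii) for $\Phi$.
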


\section{Weak solutions and spectrum}\label{Weaksolutions}
In this section we relate energies in the spectrum to energies for which   (suitably bounded) weak solutions exist. The results are taken from  \cite{BoutetdeMonvelLS-08}. The final characterization relies on  \cite{BoutetdeMonvelS-03b} as well.

\subsection*{A Weyl type criterion}\label{Weyl}
We include the following criterion for completeness. It is taken from
\cite{Stollmann-01}, Lemma 1.4.4, see also  \cite{DermenjianDI-98}, Lemma 4.1 for the same result in a slightly different formulation.

\begin{proposition}\label{PropWeyl} Let $h$ be a closed, semibounded form and
  $H$ the associated selfadjoint operator. Then the following
  assertions are equivalent:

  \begin{itemize}

  \item[(i)] $\lambda \in \sigma(H)$.

  \item[(ii)] There exists a sequence $(u_n)$ in $\mathcal{D}(h)$ with
    $\|u_n\|\to1$ and
$$ \sup_{v\in \mathcal{D}(h),\|v\|_h \leq 1} |(h -\lambda) [u_n,v ]| \to 0,$$
for $n\to \infty$.

\end{itemize}

\end{proposition}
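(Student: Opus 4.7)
The plan is to reduce both implications to the classical Weyl criterion via the spectral theorem. Without loss of generality we may shift $h$ by a constant so that $h\ge 0$: this does not affect condition (i) (it merely translates the spectrum) and leaves condition (ii) unchanged up to passage to an equivalent form norm. Write $H$ for the associated self-adjoint operator and use the spectral theorem to realize $H$ as multiplication by the coordinate $x$ on some $L^2(\sigma(H),\mu)$. Then $\mathcal{D}(h)=L^2((1+x)\,d\mu)$, with $h[u,v]=\int x\, u\bar v\, d\mu$ and $\|v\|_h^{2}=\int(1+x)|v|^{2}\,d\mu$.

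\textbf{(i)$\Rightarrow$(ii).} Apply the classical Weyl criterion to obtain $u_n\in D(H)\subset\mathcal{D}(h)$ with $\|u_n\|=1$ and $\|(H-\lambda)u_n\|\to 0$. Since $u_n\in D(H)$, for every $v\in\mathcal{D}(h)$ we have $(h-\lambda)[u_n,v]=\langle(H-\lambda)u_n,v\rangle$, and by Cauchy--Schwarz together with $\|v\|\le\|v\|_h$ the supremum in (ii) is dominated by $\|(H-\lambda)u_n\|$, hence tends to zero.

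\textbf{(ii)$\Rightarrow$(i).} Argue by contraposition. Suppose $\lambda\notin\sigma(H)$, so that $d:=\dist(\lambda,\sigma(H))>0$. A Cauchy--Schwarz computation, saturated by the test function $v_u:=\alpha(x-\lambda)u/(1+x)$ with an appropriate normalizing $\alpha$, yields the dual-norm identity
$$
\sup_{v\in\mathcal{D}(h),\,\|v\|_h\le 1}|(h-\lambda)[u,v]|=\Bigl(\int\frac{(x-\lambda)^{2}}{1+x}|u|^{2}\,d\mu\Bigr)^{1/2}
$$
for every $u\in\mathcal{D}(h)$; the elementary estimate $(x-\lambda)^{2}/(1+x)\le C(1+x)$ on $[0,\infty)$ guarantees that $v_u\in\mathcal{D}(h)$ so that the identity really makes sense. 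The continuous positive function $\phi(x):=(x-\lambda)^{2}/(1+x)$ is bounded below by some $c>0$ on the closed set $\supp(\mu)=\sigma(H)$, since it is nowhere zero there and tends to $+\infty$ as $x\to\infty$. Consequently the supremum in (ii) is at least $\sqrt{c}\,\|u_n\|\to\sqrt{c}>0$, contradicting the assumption.

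The main technical point is the identification of the dual-norm supremum in terms of the weighted integral against $\phi$; once that spectral formula is in hand, closedness of $\sigma(H)$ and the growth/positivity of $\phi$ on it close the argument with no further work.
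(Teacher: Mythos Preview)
The paper does not actually prove this proposition; it merely cites \cite{Stollmann-01}, Lemma~1.4.4 and \cite{DermenjianDI-98}, Lemma~4.1. So there is no ``paper's approach'' to compare against, and your argument stands on its own. It is correct.

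One small technical caveat: realising $H$ as multiplication by the coordinate $x$ on $L^2(\sigma(H),\mu)$ presupposes a cyclic vector. In general the spectral theorem only gives multiplication by a measurable function $f$ on some $L^2(Y,\nu)$, with $\sigma(H)$ the essential range of $f$. Your computations go through verbatim in that setting (replace $x$ by $f(y)$; then $f\ge 0$ and $|f-\lambda|\ge d$ hold $\nu$-a.e., and the lower bound on $\phi\circ f$ follows as written). Alternatively, you can bypass any concrete representation: setting $w=(H+1)^{1/2}v$ turns the supremum in (ii) into $\sup_{\|w\|\le 1}|\langle (H-\lambda)(H+1)^{-1}(H+1)^{1/2}u,\,w\rangle|=\|(H-\lambda)(H+1)^{-1/2}u\|$, which by functional calculus equals $\langle\phi(H)u,u\rangle^{1/2}$ with $\phi(x)=(x-\lambda)^2/(1+x)$. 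From here both directions are immediate exactly as in your argument, and no choice of spectral model is needed.
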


\subsection*{A Caccioppoli type inequality}
In this section we prove a bound on the energy measure of a
generalized eigenfunction on a set in terms of bounds on the
eigenfunction on certain neighborhood of the set.

We need the following notation: For $E\subset X$ and $b>0$ we define the
$b$-neighborhood of $E$ as
$$ B_b (E) :=\{y\in X : \rho(y,E)\leq b\}.$$

\begin{theorem}\label{Caccioppoli}
  Let $\mathcal{E}$ be a strictly  local regular Dirichlet form.  Let $\mu_+\in \mathcal{M}_0$ and
  $\mu_- \in \mathcal{M}_1$ be given. Let $\lambda_0\in \RR$ and $b_0>0$ be given.
  Then, there exists a $C = C (b_0,\lambda_0,\mu_-)$ such that for any
  generalized eigenfunction $u$ to an eigenvalue $\lambda\leq
  \lambda_0$ of $H_0 + \mu$ the inequality
$$ \int_E d\Gamma (u) \leq \frac{C}{b^2} \int_{B_b (E)} |u|^2
dm$$ holds for any closed $E\subset X$ and any $0<b\le b_0$.
\end{theorem}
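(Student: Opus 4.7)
The plan is to run the classical Caccioppoli argument in the Dirichlet form language: test the weak eigenvalue equation against $\eta^2 u$ for a carefully chosen intrinsic cut-off $\eta$, use the Leibniz/chain rules for $\Gamma$ to reorganize, and absorb the resulting $\aaa$-term using the form bound coming from $\mu_- \in \mathcal{M}_1$.

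First I would fabricate the cut-off. Take a $C^1$ approximation of $\zeta(t)=(1-t/b)_+$ and set $\eta := \zeta\circ \rho_E$. Since $\rho_E \in \DD_{\text{loc}}\cap C(X)$ with $d\Gamma(\rho_E)\le dm$, the chain-rule estimate (\ref{ac}) yields $\eta\in\DD_{\text{loc}}\cap C(X)$, $\eta\equiv 1$ on $E$, $\supp\eta\subset B_b(E)$, and $d\Gamma(\eta)\le b^{-2}\,dm$. If $E$ is unbounded I would multiply $\eta$ by a Lemma \ref{cutoff} cut-off on an intrinsic ball of radius $R$ to obtain compact support, perform the estimate, and let $R\to\infty$ by monotone convergence at the end.

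Second I test the weak equation against $\eta^2 u$. By Lemma \ref{localversion}(b) with $\varphi=\eta^2\in\DD$ (whose energy measure is bounded by $4b^{-2}\,dm$) and $\Psi=u\in\DD_{\text{loc}}$, the product $\eta^2 u$ belongs to $\DD$ and is compactly supported; truncating $u$ to $u_n:=((-n)\vee u)\wedge n$ puts $\eta^2 u_n$ into $\DD\cap L^\infty_c(X)$, so the extended weak equation (Remark (4) of Section \ref{Weak}) applies. Letting $n\to\infty$ and using the Leibniz identity $\aaa[u,\eta^2 u]=\aaa[\eta u]-\int u^2\,d\Gamma(\eta)$, the weak equation rewrites, with $v:=\eta u\in\DD$, as
\begin{equation*}
 \aaa[v] + \int \tilde v^2\,d\mu_+ = \lambda \int v^2\,dm + \int \tilde v^2\,d\mu_- + \int u^2\,d\Gamma(\eta).
\end{equation*}
Dropping the non-negative $\mu_+$ term, using $\lambda\le\lambda_0$, the form bound $\int \tilde v^2\,d\mu_-\le \kappa\,\aaa[v]+c_\kappa\|v\|^2$ with $\kappa<1$, and $d\Gamma(\eta)\le b^{-2}\,dm$ on $B_b(E)$, I obtain
\begin{equation*}
(1-\kappa)\,\aaa[v] \le \bigl(|\lambda_0|+c_\kappa + b^{-2}\bigr)\int_{B_b(E)}|u|^2\,dm \le \frac{C_1}{b^2}\int_{B_b(E)}|u|^2\,dm,
\end{equation*}
where $C_1=C_1(b_0,\lambda_0,\mu_-)$, using $b\le b_0$. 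A Cauchy--Schwarz on the energy measure relates $\int_E d\Gamma(u)\le \int \eta^2\,d\Gamma(u)$ to $\aaa[v]$ plus $\int u^2\,d\Gamma(\eta)\le b^{-2}\int_{B_b(E)}|u|^2\,dm$, yielding the desired constant $C=C(b_0,\lambda_0,\mu_-)$.

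The main technical obstacle I anticipate is the admissibility of $\eta^2 u$ (via the truncations $\eta^2 u_n$) as a test function in the weak equation: since $u$ is only known to lie in $\DD_{\text{loc}}$ and may be unbounded, one must verify that $\int\eta^2\tilde u_n\tilde u\,d\mu\to\int\eta^2 \tilde u^2\,d\mu$, using condition (ii) of the weak-solution definition together with the form-boundedness of $\mu_-$ (plus dominated convergence on the compact set $\supp\eta$). Everything else — the Leibniz and chain-rule manipulations, the Cauchy--Schwarz absorption, and the scaling $\propto b^{-2}$ — is routine in the Dirichlet form calculus.
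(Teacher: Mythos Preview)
The paper itself does not prove this theorem; it merely states the result, refers to \cite{BoutetdeMonvelLS-08} for the proof, and adds the remark that it suffices to treat compact $E$ since $\Gamma$ is a Radon measure. Your argument---test the weak equation against $\eta^2 u$ with an intrinsic cut-off $\eta=\zeta\circ\rho_E$, expand via the Leibniz rule to get $\aaa[u,\eta^2 u]=\aaa[\eta u]-\int u^2\,d\Gamma(\eta)$, and absorb via the form bound on $\mu_-$---is the standard Caccioppoli computation transported to the Dirichlet-form setting and is correct; it is exactly the approach of the cited reference. Your reduction to compact support by multiplying by a further cut-off on a large ball and letting $R\to\infty$ matches the paper's remark. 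The technical obstacle you flag (passing to the limit in the truncations $u_n$ inside the $\mu$-integral) is genuine but resolvable along the lines you indicate: for $\mu_-$ the form bound yields $\eta\tilde u\in L^2(\mu_-)$ directly, while for $\mu_+$ one observes $\tilde u_n\tilde u\uparrow \tilde u^2$ and uses monotone convergence, with finiteness of $\int\eta^2\tilde u^2\,d\mu_+$ following a posteriori from the identity since all other terms are finite.
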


\textbf{Remark.} For compact $E$ both sides in the above inequality are finite, for $E$ merely closed, one or both sides might be infinite. In any case, it suffices to prove the compact case since $\Gamma$ is a Radon measure.

The Caccioppoli inequality replaces the familiar
commutator estimates that are used for Schr\"odinger operators.

\subsection*{A $\frac12$ Shnol type result: How suitably bounded solutions force spectrum}

In this section, we first present an abstract Shnol type result. Unfortunately, we have to start with a disclaimer. In \cite{BoutetdeMonvelLS-08} we messed up the reference to Shnol's original result (as do many other authors). In fact, \cite{Shnol-54} is the correct citation but there are two more papers with quite similar titles  \cite{Shnol-54b,Shnol-57} and \cite{Shnol-54}  does not appear in MathSciNet.

The latter article deals with Schr\"odinger operators on the half line and says that for spectrally almost every $\lambda\in\RR$ the solution on the eigenvalue problem is bounded by $const\,  x^{\frac12 +\varepsilon}$ as $x\to\infty$ and vice versa. By ``the solution'' we mean a solution with the prescribed boundary condition at $0$ and such a solution always exists since we are dealing with ODE.
In this section we show $\frac12$ Shnol, even a little stronger: if a weak solution with suitable exponential bounds exist for a given energy, that energy is in the spectrum.

We
need the following notation.  For $E\in X$ and $b>0$ we define the inner
$b$-collar of $E$ as
$$ C_b (E) :=\{y\in E :  \rho(y,E^c)\leq b\}.$$

\begin{theorem}\label{SchnolAbstract} Let $\mathcal{E}$ be a strictly local regular Dirichlet form.  Let $\mu_+\in \mathcal{M}_0$ and
  $\mu_- \in \mathcal{M}_1$ be given. Let $\lambda\in \RR$ with
  generalized eigenfunction $u$ be given. If there exists $b>0$ and a
  sequence $(E_n)$ of compact subsets of $X$ with
$$\frac{\| u \chi_{C_{b}(E_n)} \|}{\| u \chi_{E_n}\|}\longrightarrow
0,n\longrightarrow 0,$$ then $\lambda$ belongs to $\sigma (H)$.
\end{theorem}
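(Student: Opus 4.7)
My plan is to verify the Weyl-type criterion of Proposition~\ref{PropWeyl} for $h:=\mathcal{E}+\mu$, producing a normalized sequence $u_n \in D(h)$ with $\sup_{\|v\|_h \le 1}|(h-\lambda)[u_n,v]| \to 0$. I would take $u_n := \varphi_n u/\|\varphi_n u\|$, where $\varphi_n(x):=\min\{1,\,\rho(x,E_n^c)/b\}$. Since $E_n^c$ is closed and $\mathcal{E}$ is strictly local, $\rho(\cdot,E_n^c) \in \mathcal{D}_{\text{loc}}\cap C(X)$ has energy measure dominated by $m$ (the discussion around~\eqref{ac} together with Lemma~\ref{cutoff}), so $\varphi_n \in \DD \cap L^\infty_c(X)$ with $\varphi_n \equiv 1$ on $E_n \setminus C_b(E_n)$, $\varphi_n \equiv 0$ outside $E_n$, and $d\Gamma(\varphi_n) \le b^{-2}\,dm$ supported in $C_b(E_n)$. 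Lemma~\ref{localversion}(a) puts $\varphi_n u$ in $\DD$, and condition~(ii) of the weak-solution definition together with the compact support of $\varphi_n$ places $\varphi_n u$ in $D(h)$. The hypothesis yields $\|\varphi_n u\|^2 \ge \|u\chi_{E_n}\|^2 - \|u\chi_{C_b(E_n)}\|^2 = (1-o(1))\|u\chi_{E_n}\|^2$, so the $u_n$ are well-defined for large $n$, have $\|u_n\|=1$, and $\|\varphi_n u\|^{-1}$ is comparable to $\|u\chi_{E_n}\|^{-1}$.

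The core algebraic step localizes the error to the collar by testing the weak-solution identity against $\varphi_n v$ and subtracting. For $v$ in the dense subspace $D(h)\cap L^\infty_c(X)$ (enough by density in $\|\cdot\|_h$), the product $\varphi_n v$ lies in $\DD\cap L^\infty_c(X)$ and is an admissible test function by part~(4) of the remark following the definition of weak solutions. Since $\varphi_n$ is real, $\mu[\varphi_n u,v]=\mu[u,\varphi_n v]$ and $(\varphi_n u,v)=(u,\varphi_n v)$, so the $\mu$- and $\lambda$-contributions cancel when one subtracts, and the Leibniz rule for $\Gamma$ gives
\[
(h-\lambda)[\varphi_n u,v] \;=\; \mathcal{E}[\varphi_n u,v]-\mathcal{E}[u,\varphi_n v] \;=\; \int u\, d\Gamma(\varphi_n,v) - \int v\, d\Gamma(\varphi_n,u).
\]
The entire remainder thus lives on $\supp d\Gamma(\varphi_n)\subset C_b(E_n)$.

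For the estimates I apply the Cauchy--Schwarz inequality for $\Gamma$. The first term is bounded by $(\int |u|^2 d\Gamma(\varphi_n))^{1/2}\,\mathcal{E}[v]^{1/2} \le b^{-1}\|u\chi_{C_b(E_n)}\|\cdot\|v\|_h$, after absorbing the form-bound of $\mu_-$. The second term is bounded by $(\int |v|^2 d\Gamma(\varphi_n))^{1/2}(\int_{C_b(E_n)} d\Gamma(u))^{1/2} \le b^{-1}\|v\|\cdot(\int_{C_b(E_n)} d\Gamma(u))^{1/2}$, and Theorem~\ref{Caccioppoli} (Caccioppoli) converts the remaining $\Gamma(u)$-integral into an $L^2(m)$-norm of $u$ on a slight outward thickening of $C_b(E_n)$. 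Dividing by $\|\varphi_n u\|$ and taking the supremum over $v$ yields a bound of the form $C_b\,\|u\chi_{N(E_n)}\|/\|u\chi_{E_n}\|$, with $N(E_n)$ a mild enlargement of the collar.

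The main obstacle is matching this Caccioppoli enlargement to the hypothesis, which controls only $C_b(E_n)$. I would handle it by performing the construction with a cutoff of transition width $b'\ll b$ placed inside $C_b(E_n)$ and invoking Theorem~\ref{Caccioppoli} with the same parameter $b'$: then $N(E_n)=B_{b'}(C_{b'}(E_n))$ sits inside $C_b(E_n)$ together with a narrow outer $b'$-neighbourhood of $E_n$. The outer piece can be absorbed by replacing $E_n$ with the slightly inflated compacts $E_n\cup B_{b'}(E_n)$, which perturbs both sides of the hypothesized ratio only by bounded multiplicative constants and so preserves its vanishing. With this geometric book-keeping the full bound becomes $o(1)\|v\|_h$ uniformly in $v$, and Proposition~\ref{PropWeyl} delivers $\lambda\in\sigma(H_0+\mu)$.
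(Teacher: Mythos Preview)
Your overall strategy---localize $u$ by a cutoff $\varphi_n$, invoke Proposition~\ref{PropWeyl}, reduce $(h-\lambda)[\varphi_n u,v]$ via the Leibniz rule to two integrals living on the collar, and estimate these with Cauchy--Schwarz for $\Gamma$ together with Theorem~\ref{Caccioppoli}---is the standard one and is what the proof in \cite{BoutetdeMonvelLS-08} cited by the paper does. The genuine gap is in your handling of the Caccioppoli enlargement. Replacing $E_n$ by $\tilde E_n:=B_{b'}(E_n)$ does \emph{not} preserve the hypothesis: you have no control whatsoever over $\|u\chi_{\tilde E_n\setminus E_n}\|$, so there is no reason for the ratio $\|u\chi_{C_b(\tilde E_n)}\|/\|u\chi_{\tilde E_n}\|$ to tend to zero just because the original one does. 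The correct remedy is to shift the cutoff \emph{inward} rather than to enlarge the sets. Take $\varphi_n=\zeta(\rho(\cdot,E_n^c))$ with $\zeta\equiv 0$ on $[0,b/3]$, $\zeta\equiv 1$ on $[2b/3,\infty)$, and linear in between. Then $\varphi_n\equiv 1$ on $E_n\setminus C_{2b/3}(E_n)$, so the lower bound $\|\varphi_n u\|^2\ge\|u\chi_{E_n}\|^2-\|u\chi_{C_b(E_n)}\|^2$ survives; meanwhile $\mathrm{supp}\,d\Gamma(\varphi_n)\subset\{x:b/3\le\rho(x,E_n^c)\le 2b/3\}$, and applying Theorem~\ref{Caccioppoli} to this compact set with parameter $b/6$ keeps the thickening inside $\{x:b/6\le\rho(x,E_n^c)\le 5b/6\}\subset C_b(E_n)$. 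The hypothesis then applies directly, with no modification of the sequence $(E_n)$.

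Two smaller points. Since $u$ is only assumed to lie in $\DD_{\text{loc}}$ and not in $L^\infty_{\text{loc}}$, you must invoke part~(b) of Lemma~\ref{localversion}, not part~(a), to place $\varphi_n u$ in $\DD$. And your justification that $\varphi_n u\in D(\mathcal{E}+\mu)$ is incomplete: condition~(ii) of the weak-solution definition only yields $\tilde u\in L^1_{\text{loc}}(|\mu|)$, whereas membership in $D(\mathcal{E}+\mu)$ requires $\widetilde{\varphi_n u}\in L^2(X,\mu_+)$, which is strictly stronger and does not follow from~(ii) alone.
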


We will now specialize our considerations to subexponentially bounded
eigenfunctions.

\medskip

A function $J: [0,\infty) \longrightarrow [0,\infty)$ is said to
be subexponentially bounded if for any $\alpha>0$ there exists a
$C_\alpha\geq 0$ with $J(r) \leq C_\alpha \exp(\alpha r)$ for all
$r>0$.
A $\KK$-valued function $f$ on a pseudo metric space   $(X,\rho)$
with measure $m$ is said to be subexponentially bounded if for some $x_0\in X$ and
$\omega (x) = \rho (x_0,x)$ the function $e^{-\alpha \omega} u$ belongs to  $ L^2 (X,m)$ for any $\alpha >0$.
Recall that a strictly local regular Dirichlet form $\cE$ gives rise to an intrinsic pseudo metric $\rho$ and
an associated pseudo metric space $(X,\rho)$.

\begin{theorem}\label{SchnolConcrete} Let $\mathcal{E}$ be a strictly  local regular Dirichlet form, $x_0\in X$
  arbitrary and $\omega (x) = \rho (x_0,x)$. Let $\mu_+\in \mathcal{M}_0$ and
  $\mu_- \in \mathcal{M}_1$ be given. Let $u\not=0$ be a subexponentially bounded  generalized
  eigenfunction.
Then, $\lambda$
  belongs to $\sigma(H)$.
\end{theorem}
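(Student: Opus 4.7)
The plan is to deduce Theorem \ref{SchnolConcrete} from the abstract Shnol criterion of Theorem \ref{SchnolAbstract} by exhibiting compact sets $E_n$ (intrinsic balls of growing radius centered at $x_0$) along which the mass of $u$ in a $b$-collar is negligible compared to the total mass inside. First I would introduce the auxiliary function
\[
 a(R) := \int_{B(x_0,R)} |u|^2\, dm,
\]
which is nondecreasing and, since $u\neq 0$ in $L^2_{\text{loc}}$ and $X=\bigcup_R B(x_0,R)$, eventually strictly positive. I would pick a sequence $R_n\to\infty$ (to be selected in the second step) and take $E_n:=B(x_0,R_n)$. Because $\rho_x=\rho(x_0,\cdot)$ generates the topology under strict locality, these balls are closed; in the setting of the theorem (used throughout the applications) they are compact, and in any case one may replace them by compact subsets capturing all but arbitrarily little of the $L^2$-mass without affecting the argument. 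The key observation is that
\[
 C_b(E_n)\subseteq B(x_0,R_n)\setminus B(x_0,R_n-b),
\]
since any $y\in E_n$ with $\rho(y,E_n^c)\le b$ must lie within intrinsic distance $b$ of a point outside $B(x_0,R_n)$, hence at distance $>R_n-b$ from $x_0$. Consequently
\[
 \|u\chi_{C_b(E_n)}\|^2 \;\le\; a(R_n)-a(R_n-b).
\]

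The central step is to exploit subexponential boundedness to find $R_n\to\infty$ along which $[a(R_n)-a(R_n-b)]/a(R_n)\to 0$. For this I would observe that for any $\alpha>0$, using $\omega\le R$ on $B(x_0,R)$,
\[
 a(R) \;=\; \int_{B(x_0,R)} e^{2\alpha\omega}e^{-2\alpha\omega}|u|^2\,dm \;\le\; e^{2\alpha R}\,\|e^{-\alpha\omega}u\|_2^2 \;=\; C_\alpha\, e^{2\alpha R},
\]
so $a$ grows subexponentially. Suppose, for contradiction, that $a(R)-a(R-b)\ge \varepsilon\, a(R)$ for all $R$ larger than some $R_0$, with some $\varepsilon\in(0,1)$. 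Then $a(R)\ge(1-\varepsilon)^{-1}a(R-b)$ for all such $R$, and iterating $k$ times gives $a(R_0+kb)\ge (1-\varepsilon)^{-k}a(R_0)$. Choosing $\alpha>0$ small enough that $(1-\varepsilon)e^{2\alpha b}<1$ and combining with $a(R_0+kb)\le C_\alpha e^{2\alpha(R_0+kb)}$ forces
\[
 [(1-\varepsilon)e^{2\alpha b}]^{-k} \;\le\; \frac{C_\alpha}{a(R_0)}\,e^{2\alpha R_0},
\]
whose left-hand side blows up, a contradiction. Hence $\liminf_{R\to\infty}[a(R)-a(R-b)]/a(R)=0$, and I can extract a sequence $R_n\to\infty$ realizing this limit.

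With this sequence the hypothesis of Theorem \ref{SchnolAbstract} is satisfied: for $E_n=B(x_0,R_n)$ we have
\[
 \frac{\|u\chi_{C_b(E_n)}\|}{\|u\chi_{E_n}\|} \;\le\; \left(\frac{a(R_n)-a(R_n-b)}{a(R_n)}\right)^{1/2}\longrightarrow 0,
\]
so Theorem \ref{SchnolAbstract} yields $\lambda\in\sigma(H_0+\mu)$, which is the desired conclusion. The only delicate point I foresee is ensuring that $E_n$ can be taken compact; in the strictly local setting balls $B(x_0,R_n)$ are closed and, under the standing completeness/topology conditions from Section \ref{Strongly}, also compact, but if one wanted to avoid this one could replace $B(x_0,R_n)$ by any compact $K_n\subset B(x_0,R_n)$ with $a(R_n)-\|u\chi_{K_n}\|^2$ smaller than, say, $[a(R_n)-a(R_n-b)]$, since the inner $b$-collar of $K_n$ is still contained in the same annulus. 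Apart from this routine verification, the argument is essentially the elementary growth dichotomy sketched above.
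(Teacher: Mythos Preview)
Your approach is exactly the deduction the paper has in mind: Theorem~\ref{SchnolConcrete} is meant to follow from Theorem~\ref{SchnolAbstract} by taking $E_n=B(x_0,R_n)$ and selecting the radii via the growth dichotomy you describe. The paper itself gives no proof here (it defers to \cite{BoutetdeMonvelLS-08}), and your iteration argument---showing that $a(R)-a(R-b)\ge\varepsilon\,a(R)$ for all large $R$ would force $a(R_0+kb)\ge(1-\varepsilon)^{-k}a(R_0)$, contradicting $a(R)\le C_\alpha e^{2\alpha R}$ for $\alpha$ small---is the standard and correct mechanism.

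The one substantive defect is your compactness workaround. The assertion that for an arbitrary compact $K_n\subset B(x_0,R_n)$ ``the inner $b$-collar of $K_n$ is still contained in the same annulus'' is false: since $K_n^c\supseteq B(x_0,R_n)\setminus K_n$, any point of $K_n$ within distance $b$ of that leftover piece lies in $C_b(K_n)$, and such points can sit arbitrarily deep inside $B(x_0,R_n)$. Hence $\|u\chi_{C_b(K_n)}\|$ is not bounded by the annulus mass, and the ratio in Theorem~\ref{SchnolAbstract} is not controlled. The usual remedies are either to work under the (standard in this circle of ideas, cf.~\cite{Sturm-94b}) hypothesis that closed intrinsic balls are compact, or to go back into the proof of Theorem~\ref{SchnolAbstract} and argue directly with the cutoff $\eta_n=\zeta(\rho_{x_0})$ together with the Caccioppoli estimate for closed $E$, using finiteness of $a(R_n+b)$ to put $\eta_n u$ into the form domain without compact support; the shortcut you propose does not do the job. (A minor aside: your inclusion $C_b(E_n)\subset B(x_0,R_n)\setminus B(x_0,R_n-b)$ is off by the sphere $\{\rho(x_0,\cdot)=R_n-b\}$; this is harmless---choose $R_n$ among the continuity points of $a$, or enlarge $b$ slightly---but should be said.)
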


\subsection*{A $\frac12$ Shnol type result: How spectrum forces suitably bounded generalized eigenfunctions.}
In the last subsection we have discussed that existence of suitably bounded weak solutions implies that an energy belongs to the spectrum. In this section we discuss a converse given in \cite{BoutetdeMonvelS-03b} that was known before for ordinary  Schr\"odinger operators; see the literature cited in the monograph \cite{Berezanskii-68}.

Recall that $\aaa$ is called \textit{ultracontractive} if  for each $t>0$ the  semigroup  $e^{-tH_0}$ gives a map from  $L^2(X)$ to $L^\infty (X)$.

\begin{theorem}\label{T46}
  Let $\mathcal{E}$ be a strictly  local, regular, ultracontractive  Dirichlet   form
  satisfying condition  {\rm {(G)}}. Let $\mu=\mu_+-\mu_-$
  with $\mu_+\in\C{M}_0$ and $\mu_-\in \hat{\mathcal{S}}_K$ with $
  c_{Kato}(\mu)<1$. Define $H:=H_0+\mu$. Then for spectrally a.e.
  $\lambda\in \sigma(H)$ there is a subexponentially bounded
  generalized eigenfunction $u\not= 0$ with $Hu=\lambda u$.
\end{theorem}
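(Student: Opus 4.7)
The plan is to run the Berezanskii--Gelfand--Kostyuchenko eigenfunction expansion in a Hilbert--Schmidt rigging built from exponential weights, and then to identify the abstract generalized eigenvectors with weak solutions in the sense of Section~\ref{Weak}. Fix $x_0\in X$, set $\omega(x)=\rho(x_0,x)$ and, for $\alpha>0$, put $\psi_\alpha=e^{-\alpha\omega}$. The subexponential volume growth (G) together with layer-cake integration gives
$$\int\psi_\alpha^{2}\,dm=\int_0^\infty 2\alpha e^{-2\alpha s}m(B(x_0,s))\,ds<\infty$$
for every $\alpha>0$, so the Gelfand triple
$$L^2(X,\psi_\alpha^{-2}dm)\ \subset\ L^2(X,m)\ \subset\ L^2(X,\psi_\alpha^{2}dm)$$
is well defined.

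The next step is to install the Hilbert--Schmidt operator. Ultracontractivity of $e^{-tH_0}$, transferred to $H=H_0+\mu$ via the Kato-class condition $c_{Kato}(\mu)<1$, yields an essentially bounded integral kernel $p_t$ for $e^{-tH}$ on every $t>0$. Via the Laplace representation $(H+c)^{-k}=\frac{1}{(k-1)!}\int_0^\infty t^{k-1}e^{-ct}e^{-tH}\,dt$, the resolvent $(H+c)^{-k}$ then has an essentially bounded kernel for $c,k$ large enough, so that
$$\iint\psi_\alpha(x)^{2}\,|(H+c)^{-k}(x,y)|^{2}\,\psi_\alpha(y)^{2}\,dm(x)\,dm(y)\ <\ \infty.$$
Hence $M_{\psi_\alpha}(H+c)^{-k}M_{\psi_\alpha}$ is Hilbert--Schmidt on $L^2(X,m)$, and the abstract eigenfunction expansion theorem (cf.~\cite{BoutetdeMonvelS-03b} and the references there) produces, for every $\alpha>0$, a set $A_\alpha\subset\sigma(H)$ of full spectral measure such that for each $\lambda\in A_\alpha$ there is a nonzero $u_\lambda^{(\alpha)}\in L^2(X,\psi_\alpha^{2}dm)$ satisfying $(u_\lambda^{(\alpha)}\mid Hf)=\lambda\,(u_\lambda^{(\alpha)}\mid f)$ for $f$ in a dense core.

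I would then identify $u_\lambda^{(\alpha)}$ with a weak solution in the sense of Section~\ref{Weak}. Since $\DD\cap L^\infty_c(X)\subset D(H)$, testing with $\varphi\in\DD\cap L^\infty_c(X)$ converts the abstract eigenvalue equation into
$$\aaa[u_\lambda^{(\alpha)},\varphi]+\int_X\varphi\,\widetilde{u_\lambda^{(\alpha)}}\,d\mu\ =\ \lambda\,(u_\lambda^{(\alpha)}\mid\varphi),$$
which is precisely part~(4) of the remark following the Definition in Section~\ref{Weak}; the Caccioppoli estimate of Theorem~\ref{Caccioppoli} then upgrades $u_\lambda^{(\alpha)}$ from $L^2_{\text{loc}}$ to $\DD_{\text{loc}}$, so it is a weak solution in our sense. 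Running the construction for the countable sequence $\alpha_n=1/n$ and setting $A=\bigcap_n A_{\alpha_n}$ (still of full spectral measure) produces, for each $\lambda\in A$, weak solutions in every $L^2(X,\psi_{\alpha_n}^{2}dm)$.

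The hard part will be to extract a \emph{single} weak solution $u_\lambda$ lying in $L^2(X,\psi_{\alpha_n}^{2}dm)$ for \emph{all} $n$ simultaneously, which is what the subexponential boundedness of Section~\ref{Weaksolutions} requires. This rests on a measurable-selection argument inside the direct-integral decomposition of $H$: on the multiplicity-one part of the spectrum any two generalized eigenfunctions at a given $\lambda$ are proportional and the functions $u_\lambda^{(\alpha_n)}$ coincide after normalization, while on higher-multiplicity pieces one fixes a measurable section of the spectral fiber and checks by monotonicity in $\alpha$ that it lies in each of the weighted spaces. A secondary technical point is the passage from the abstract generalized eigenvector to an element of $\DD_{\text{loc}}$, which rests on the interior regularity supplied by the Caccioppoli inequality of Theorem~\ref{Caccioppoli}.
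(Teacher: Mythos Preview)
The paper itself does not prove Theorem~\ref{T46}; it is quoted from \cite{BoutetdeMonvelS-03b}, so there is no ``paper's own proof'' to compare against beyond that reference. Your overall strategy --- a Berezanskii-type expansion through a Hilbert--Schmidt rigging whose weight is controlled by the volume growth (G), with the bounded kernel coming from ultracontractivity transferred through the Kato-class bound --- is indeed the approach of \cite{BoutetdeMonvelS-03b}. The architecture is right; several of the steps, however, are not.

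First, the assertion $\DD\cap L^\infty_c(X)\subset D(H)$ is false in general: this set lies in the \emph{form} domain $D(\cE+\mu)$, not in the operator domain (already for $H_0=-\Delta$ on $\RR^d$ one has $\DD=W^{1,2}$ but $D(H_0)=W^{2,2}$). Consequently you cannot feed $\varphi\in\DD\cap L^\infty_c$ directly into the abstract relation $(u\mid Hf)=\lambda(u\mid f)$. Second, your appeal to Theorem~\ref{Caccioppoli} to promote $u$ from $L^2_{\text{loc}}$ to $\DD_{\text{loc}}$ is circular: the Caccioppoli inequality in this paper is stated for generalized eigenfunctions, and those are by definition already in $\DD_{\text{loc}}$. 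In \cite{BoutetdeMonvelS-03b} the local form-regularity is obtained instead from the way the generalized eigenvector is produced --- it lies in the range of (weighted) powers of $(H+c)^{-1}$, and $(H+c)^{-1}$ maps into the form domain, which, combined with locality, yields $u\in\DD_{\text{loc}}$ directly.

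Third, and most importantly, the ``hard part'' you isolate is an artifact of your choice of rigging. Under (G) one can pick a \emph{single} weight $g=e^{-\phi\circ\omega}$ with $\phi(R)/R\to 0$ so slowly that $g\in L^2(X,m)$ while $g^{-1}$ is subexponentially bounded; running the Hilbert--Schmidt expansion once with this $g$ already places every generalized eigenfunction in $L^2(g^2\,dm)\subset\bigcap_{\alpha>0}L^2(\psi_\alpha^2\,dm)$. No countable intersection, no measurable selection over fibers, and no multiplicity discussion is needed. This is how \cite{BoutetdeMonvelS-03b} avoids the difficulty you describe.
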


Actually, as remarked in \cite{BoutetdeMonvelS-03b}, one does arrive at generalized eigenfunctions with polynomial bounds if one assumes that the volume of balls grows polynomially as well.

\subsection*{A Shnol type result: Characterizing the spectrum by subexponentially bounded solutions}
We can now put together the results of the preceeding subsections and obtain a characterization of the spectrum via subexponentially bounded solutions.

\medskip

\begin{corollary}\label{Characterization-Schnol}
  Let $\mathcal{E}$ be a stricly  local, regular, ultracontractive Dirichlet form
  satisfying {\rm {(G)}}. Let $\mu=\mu_+-\mu_-$
  with $\mu_+\in\C{M}_0$ and $\mu_-\in \hat{\mathcal{S}}_K$ with $
  c_{Kato}(\mu)<1$. Define $H:=H_0+\mu$. Then the spectral measures of
  $H$ are supported on
$$
\{ \lambda\in\RR | \exists \mbox{ subexponentially bounded }u\not=0\mbox{
  with }Hu=\lambda u\} .
$$
\end{corollary}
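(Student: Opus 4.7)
The plan is simply to splice the two halves of the Shnol theorem together. Set
$$
S := \{\lambda\in\RR \mid \exists\, u\neq 0 \text{ subexponentially bounded with } Hu=\lambda u\},
$$
so that the claim is precisely $\mu_\psi(\RR\setminus S)=0$ for every spectral measure $\mu_\psi$ of $H$ (equivalently, every spectral measure is supported on $S$).

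First I would dispose of the points outside the spectrum. Since spectral measures always live on $\sigma(H)$, we have $\mu_\psi(\RR\setminus\sigma(H))=0$, so it suffices to prove $\mu_\psi(\sigma(H)\setminus S)=0$. (Theorem \ref{SchnolConcrete} actually gives the stronger inclusion $S\subseteq\sigma(H)$, which is reassuring but not strictly needed for the support statement; it is, however, essential if one wants to read the corollary as saying that $S$ equals (a.e.) the topological support of the spectral measures.) The hypotheses on $\cE$ and $\mu$ in the corollary are exactly those of Theorem \ref{T46}, so that result applies and yields: for spectrally almost every $\lambda\in\sigma(H)$ there is a nonzero subexponentially bounded $u$ with $Hu=\lambda u$, i.e.\ $\lambda\in S$. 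Unpacking the meaning of ``spectrally a.e.'' this is precisely the statement $\mu_\psi(\sigma(H)\setminus S)=0$ for every $\psi\in L^2(X,m)$, which together with the previous paragraph gives $\mu_\psi(\RR\setminus S)=0$.

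The only real subtlety is a measurability issue: the phrase ``supported on $S$'' is only meaningful if $S$ is measurable with respect to every spectral measure $\mu_\psi$. This is implicit in the formulation of Theorem \ref{T46}, whose conclusion ``for spectrally a.e.\ $\lambda\in\sigma(H)$'' quantifies over a set that is built from $S$, and so the argument in \cite{BoutetdeMonvelS-03b} producing Theorem \ref{T46} already provides $S$ with enough measurability (for instance by exhibiting a measurable selection of eigenfunctions depending on $\lambda$) for the statement to make literal sense. I would therefore regard the main ``obstacle'' as being purely a matter of bookkeeping: verifying that the set-theoretic $S$ as defined above coincides, up to spectral null sets, with the measurable set produced inside the proof of Theorem \ref{T46}, and recording that the two inclusions obtained there and in Theorem \ref{SchnolConcrete} combine to the support statement.

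In summary the proof is a one-line combination: Theorem \ref{T46} gives $\sigma(H)\subseteq S$ modulo spectrally null sets; Theorem \ref{SchnolConcrete} gives $S\subseteq\sigma(H)$; hence the spectral measures, which live on $\sigma(H)$, live on $S$ as well.
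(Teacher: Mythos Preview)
Your proposal is correct and matches the paper's approach exactly: the paper does not give a separate proof of this corollary but simply states that one ``put[s] together the results of the preceding subsections,'' i.e.\ combines Theorem~\ref{T46} with Theorem~\ref{SchnolConcrete}, which is precisely what you do. Your observation that Theorem~\ref{SchnolConcrete} is not strictly needed for the bare support statement (only for the tighter identification $S\subseteq\sigma(H)$), and your remark on the measurability of $S$ being implicit in the formulation of Theorem~\ref{T46}, are both accurate refinements that the paper leaves unsaid.
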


\section{Examples and applications}\label{Examples}
Several different types of operators to which our results can be
applied have already been mentioned in Section 1. This includes
classical examples like Schr\"odinger operators and symmetric
elliptic second order differential operators on unbounded domains in
$\RR^d$.
More generally, Laplace-Beltrami operators and rather general
elliptic second order differential operators on Riemannian manifolds
fall also within this class. In this section we will discuss in some more
detail two types of examples which have attracted attention more
recently, namely singular interaction Hamiltonians and quantum graphs.
Moreover, we discuss here applications of the ground state transformation.

\subsection*{Hamiltonians with singular interactions}

Hamiltonians with singular interactions arise when the Laplacian is perturbed
by a perturbation which is localized on a set of Lebesgue measure zero.
Here we consider more specifically operators with an interaction supported on an
orientable, compact sub-manifold $M \subset \RR^d$ of class
$C^2$ and codimension one. The manifold $M $ may or may
not have a boundary.
In the sequel we follow roughly the exposition
in \cite{KondejV-06a}. For more background see
\cite{BrascheEKS-94} or Appendix K of \cite{AlbeverioGHH-04}.

\medskip

The simplest type of Hamiltonian with a potential perturbation supported on $M $ is
formally given by
\begin{equation} \label{e-delta}
(\Hag)f(x) := \Big(-\Delta  - \alpha \cdot \delta(x -M)\Big)f(x)\,,
\end{equation}
where $\alpha>0 $ is a coupling constant.
To show that the operator $\Hag$ can be given a rigorous
meaning we establish next that it falls into the framework outlined in Section \ref{Measure}.

For this purpose denote by $\nu_M $ the Dirac
measure in $\RR^d$ with support on $M$. This means that for
any Borel set $G\subset \RR^d$ we have $\nu_M (G)= s_{d-1}(G\cap M )$.
Here $s_{d-1}$ is the $d-1$ dimensional surface measure on
$M $.
From Theorem 4.1 in
\cite{BrascheEKS-94} we infer that the measure $\nu_M $
belongs to the Kato class. In particular, for such a measure
and an arbitrary $a>0$ there exists $b_a<\infty$ such that
\[
\int_{\RR^d} |\psi(x) |^2 \nu_M (dx)
\leq
a\|\nabla \psi \|^2 +b_a\|\psi\|^2\,.
\]
As mentioned in Section \ref{Measure} this implies that
the form  $\cE_{\alpha\nu_M}:= \cE + \alpha \nu_M$ is
closed on the domain $\cD$ and  densely defined.
The unique selfadjoint operator associated to
$\cE_{\alpha\nu_M}$ acing  on $L^2(\RR^d)$
will be denoted by $H_{\alpha\nu_M}$.

It is possible to define the operator $H_{\alpha\nu_M}$
by appropriate selfadjoint boundary conditions on
$M$, cf.{} \cite{BrascheEKS-94,KondejV-06a}.
To explain this more precisely we need some notation.
Denote by $\mathrm{n}\colon M \to \Sphere^d$ a global
unit normal vectorfield on $M$.
Denote by $D(\tilde{H}_{\alpha
\nu_M})$ the set of functions
\[
\psi\in C(\RR^d)\cap W^{1,2}(\RR^d) \cap
C^\infty(\RR^d\setminus M) \cap W^{2,2}(\RR^d\setminus M)
\]
which satisfy for all $ x \in M $
\begin{align*}
 \lim_{\epsilon \searrow 0} \frac{\psi(x+\epsilon
\mathrm{n}(x))-\psi(x)}{\epsilon}
+
 \lim_{\epsilon \searrow 0} \frac{\psi(x-\epsilon
\mathrm{n}(x))-\psi(x)}{\epsilon}
= - \alpha \, \psi(x)
\end{align*}
Using Green's formula one concludes as in Remark 4.1 of
\cite{BrascheEKS-94} that the closure of $-\Delta$ with domain
$D(\tilde{H}_{\alpha \nu_M})$ is the selfadjoint operator
$H_{\alpha \nu_M }$.

\bigskip

Since the measure $\nu_M$ belongs to the Kato class and is supported on a compact
set, the essential spectrum of $H_{\alpha\nu_M}$ equals $[0,\infty)$,
cf.~Theorem 3.2 in \cite{BrascheEKS-94}.
In space dimension two $H_{\alpha\nu_M}$ has nonempty discrete spectrum for any
positive value of the coupling constant $\alpha $.
This can be seen using the proof of Corollary 11 in \cite{Brasche-03}.
For higher dimensions there is a critical value $\alpha_c > 0$
such that there exists a negative eigenvalue
if and only if $\alpha \geq  \alpha_c$, cf.{} the discussion on page 20 of \cite{ExnerY-03}.

%
%
%

\subsection*{Quantum graphs}
Quantum graphs are given in terms of a metric graph $\G$ and a
Laplace (or more generally) Schr\"odinger operator $H$ defined on
the edges of $\G$ together with a set of (generalised) boundary
conditions at the vertices which make $H$ selfadjoint.
To make this more precise
we define the geometric structure of metric graphs,
as well as the operators acting on the associated $L^2$-Hilbert space.

We start with the definition of a metric graph which is appropriate for our purposes.
\begin{definition}
Let $V$ and $E$ be countable sets, $l_-$ a positive real, and $\cG$ a map
\[
\cG \colon E \to V\times V \times [l_-,\infty]
, \quad e \mapsto (\iota(e),\tau(e),l_e).
\]
Here $[l_-,\infty]$ means $[l_-,\infty)\cup \{+ \infty\}$.
We call the triple $\G=(V,E,\cG)$ a metric graph, elements of $V=V(\G)$ vertices, elements of $E=E(\G)$ edges,
$\iota(e)$ the initial vertex of $e$, $\tau(e)$ the terminal vertex of $e$ and
$l_e$ the length of $e$. Both $\iota(e)$ and $\tau(e)$ are called endvertices of $e$,
or incident to $e$.
The number of edges incident to the vertex $v$ is called the
\textit{degree of $v$}. We assume that the degree is finite for all vertices.
\end{definition}

Note that the two endvertices of an edge are allowed to coincide and there may be multiple edges
connecting two vertices.
We let $X_e:= \{ e\}\times (0, l_e)$, $X=X_\Gamma=V\cup\bigcup_{e\in E}X_e$
and $\overline{X_e}:=\{ e\}\times [0, l_e]$.
On the set $X$ it is possible to define in a natural way the length of paths and, using
this notion, also a metric, cf. Section 1 in \cite{LenzSS}.

Now we introduce the relevant Hilbert spaces on which the Laplace, respectively, Schr\"odinger operators
will act. For $ k \in \{0,1,2\}$ we set
\[
W^{k,2} (E) := \bigoplus_{e\in E } W^{k,2}(0, l_e)
\]
and for $W^{0,2} (E)$ we use the usual notation $L^{2} (E)$.
Given $ k \in \{0,1,2\}$ and a function  $u \in W^{1,k}(E)$
we denote by $u_e$ the projection of $u$ to the space $W^{k,2}(0,l_e)$.
Thus we can identify each $u \in W^{1,k}(E)$  with a family $(u_e)_{e\in E}, u_e\in W^{0,2}(0, l_e)$.

Next we discuss pointwise properties of functions in $u \in W^{1,k}(0, l_e)$.
Recall that for any  $l>0$  any element $h$ of $W^{1,2}(0,l)$ has a continuous version; we will always pick this version and then  the boundary value
$h(0):=\lim_{x\to 0+} h(x)$ exists and satisfies
\begin{equation}\label{Sobolev}
|h(0)|^2 \leq \frac{2}{l} \|h\|^2_{L^2 (0,l)} + l \|h'\|^2_{L^2 (0,l)}
\end{equation}
by standard Sobolev type theorems. Consider now an edge $e$, the vertex
$v=\iota(e)\in V$ and $u\in W^{1,2} (0,l_e)$.  Then the limit $ u (v) :=\lim_{t\to 0} u
(t) $ exists, as well as $u (w) :=\lim_{t\to l_e} u (t) $ for $w=\tau(e)$ and
\eqref{Sobolev} holds (with the obvious modifications).  Similarly, for an
edge $e$ and the vertex $v= \iota(e) $ and the vertex $w=\tau(e)$ and $u\in W^{2,2}
(0,l_e)$ the limits $ u' (v) :=\lim_{x\to v, x\in e} u' (x) $ and  $ u' (w)
:=-\lim_{x\to w, x\in e} u' (x)$ exist.
Note that our sign convention is such that
the definition of the derivative is canonical, i. e. independent of
the choice of orientation of the edge.  For $f\in W^{1,2}(E)$ and each vertex
$v$ we gather the boundary values of $f_e (v)$ over all edges $e$ adjacent to
$v$ in a vector $f(v)$.  More precisely, denote by $E_v:= \{ e\in E| v\in\{
\iota(e),\tau(e)\}\}$ the set of vertices adjacent to $v$ and define $f(v):=
(f_e(v))_{e\in E_v}\in \CC^{E_v}$ and similarly, for $f\in W^{2,2}(E)$ we
further collect the boundary values of $f_e' (v)$ over all edges $e$ adjacent
to $v$ in a vector $f' (v)\in \CC^{E_v}$.  These boundary values of functions
will be used to define the boundary conditions of the Laplacian, respectively the domains of definition of
the forms we will be considering. Here we restrict ourselves to Kirchhoff boundary conditions and call a function $(u_e)_{e\in E}\in W^{1,2}(E)$ continuous, if, for any vertex $v$ and all edges adjacent to it $u_e(v)=u_{e'}(v)$.
Now set
\begin{align}
 D(s_0) &:=  W^{1,2}(E)\cap C(X) \\
 s_0 (f,g) &:=\sum_{e\in E} \int_0^{l(e)} f_e' (t)\overline{g}_e'(t) dt
\end{align}
Obviously, the form $s_0$ is bounded below, closed a Dirichlet form and strongly local.
Hence, there exists a unique associated self-adjoint operator which we denote by $H_P$.
It can be explicitly characterized by
\begin{align*}
D(H_K) &:= \{f\in W^{2,2} (E)\cap C(X)  : \sum_{e\in E_v} f_e (v) =0\;\:\mbox{for all $v\in V$ } \}\\
 (H_K f)_e:&= - f_e'' \;\:\mbox{for all $e\in E$}.
\end{align*}
It is possible to define quantum graphs with more general generalised boundary conditions at the vertices but not all reasonable choices will lead to Dirichlet forms; in \cite{KantKVW-09} a characterization of those boundary conditions for which the form is a Dirichlet form is given. However the setup is somewhat different from ours.

\subsection*{Applications}\label{Applications}

The ground state transformation which featured in Theorem
\ref{t:transformation_global} can be used to obtain a formula for
the lowest spectral gap. To be more precise let us assume that
$\mathcal{E}$, $\nu$ and $\Phi$ satisfy the conditions of Theorem
\ref{t:transformation_global}. Assume in addition that $\Phi$ is in
$\mathcal{D} (\cE+ \nu)$. Then $\Phi$ is an eigenfunction of $H$
corresponding to the eigenvalue $\lambda  = \min \sigma (H)$. We denote by
\[
 \lambda ':= \inf \{ \aaa[u,u] + \nu[u,u] \mid u \in \DD, \|u \|=1, u \perp \Phi \} \,
\]
the second lowest eigenvalue below the essential spectrum of $H$,
or, if it does not exist, the bottom of $\sigma_{ess}(H)$. Then we
obtain the following formula
\begin{equation}\label{e:gap-formula}
 \lambda ' -\lambda  = \inf_{ \{ u \in \DD (\cE+\nu) , \|u \|=1, u \perp \Phi \}} \int_X \Phi^2 d\Gamma(u \Phi^{-1},u \Phi^{-1}) \
\end{equation}
which determines the lowest spectral gap. It has been used in
\cite{KirschS-87,KondejV-06a,Vogt} to derive lower bounds on the
distance between the two lowest eigenvalues of different classes of
Schr\"odinger operators (see \cite{SingerWYY-85} for a related
approach). In \cite{KirschS-87} bounded potentials are considered,
in \cite{KondejV-06a} singular interactions along curves in $\RR^2$
are studied, and \cite{Vogt} generalizes these results using a
unified approach based on Kato-class measures.

If for a subset  $U \subset X$ of positive measure and a function
 $u \in \DD$ with $ \|u \|=1$ and $ u \perp \Phi \}$ the non-negative
measure $\Gamma(u\Phi^{-1},u\Phi^{-1})$ is absolutely continuous
with respect to $m$, one can exploit formula \eqref{e:gap-formula}
to derive the following estimate (cf.{} Section~3 in \cite{Vogt},
and \cite{KirschS-87,KondejV-06a} for similar bounds). Denote by
$\gamma(u\Phi^{-1})= \frac{d\Gamma(u\Phi^{-1},u\Phi^{-1})}{dm}$ the
Radon-Nykodim derivative. Then an application of the Cauchy-Schwarz
inequality gives
\[
\int_U \Phi^2 d\Gamma(u \Phi^{-1},u \Phi^{-1}) \ge \frac{1}{m(U)}
\inf_U \Phi^2
  \left( \int_U \sqrt{\gamma(u\Phi^{-1})}dm\right)^2 \,
\]

Now we formulate more precisely the setting
in which the above mentioned results \cite{KirschS-87,KondejV-06a,Vogt}
apply. In fact, we choose here to formulate the main theorem of
\cite{Vogt}. It applies to more general situations than \cite{KirschS-87} and \cite{KondejV-06a}
and is formulated in the language of Dirichlet forms.
Consider the case where $X= \RR^d$, $\cE$ is equal to the classical Dirichlet form,
 $\nu$ is a non-negative, compactly supported measure satisfying for some
$c_\nu \in (0,\infty), \alpha \in [0,2)$ the bound
$\nu(B(x,r)) \le c_\nu r^{d-\alpha}$ for all $r>0, x \in \RR^d$,
and $D$ denotes the diameter of the support of $\nu$.
Let us assume that the bottom of the spectrum of $\cE + \nu$ consists
of two isolated eigenvalues, which will be denoted by $\lambda_0 < \lambda_1$.
Under these assumptions there exist constants $C,C_0, p,q \in (0,\infty)$
such that
\[
\lambda_1-\lambda_0 \ge \frac{C}{(c_\nu+1)^p(D+1)^q} \cdot |\lambda_0| \cdot e^{-C_0 (D+1) \cdot \sqrt{|\lambda_0|}}
\]

\bigskip

The ground state transformation plays an important role in other
situations as well.
It is for instance used in the the study of $L^p$-$L^q$ mapping properties of the semigroup associated to $\aaa$
\cite{DaviesS-84}. In the theory of random Schr\"odinger operators it is used to
remove a symmetry condition from the proof of
of Lifschitz tails \cite{Mezincescu-87}.\\[5mm]

\def\cprime{$'$}\def\polhk#1{\setbox0=\hbox{#1}{\ooalign{\hidewidth
  \lower1.5ex\hbox{`}\hidewidth\crcr\unhbox0}}}


\begin{thebibliography}{10}

\bibitem{Agmon-70}
S.~Agmon.
\newblock Lower bounds for solutions of {Schr\"odinger} equations.
\newblock {\em J. Anal. Math.}, 23:1--25, 1970.

\bibitem{Agmon-84}
S.~Agmon.
\newblock {\em {On Positive Solutions of Elliptic Equations with Periodic
  Coefficients in $R^N$, Spectral Results and Extensions to Elliptic Operators
  on Riemannian Manifolds in Differential Equations}}
\newblock North Holland, Amsterdam, 1984.

\bibitem{AgmonH-76}
S.~Agmon and L.~H\"ormander.
\newblock Asymptotic properties of solutions of differential equations with
  simple characteristics.
\newblock {\em J. d'Analyse Math.}, 30:1--38, 1976.

\bibitem{Agmon-65}
S. Agmon.
\newblock {\em Lectures on elliptic boundary value problems}.
\newblock Van Nostrand Mathematical Studies, No. 2. Van Nostrand, Princeton,
  1965.

\bibitem{Agmon-82}
S. Agmon.
\newblock {\em Lectures on exponential decay of solutions of second-order
  elliptic equations: bounds on eigenfunctions of ${N}$-body {S}chr\"odinger
  operators}.
\newblock Princeton University Press, Princeton, N.J., 1982.

\bibitem{Agmon-85}
S. Agmon.
\newblock Bounds on exponential decay of eigenfunctions of {S}chr\"odinger
  operators.
\newblock In {\em Schr\"odinger operators (Como, 1984)}, volume 1159 of {\em
  Lecture Notes in Math.}, pages 1--38. Springer, Berlin, 1985.



\bibitem{Agmon-83}
S.~Agmon.
\newblock On positivity and decay of solutions of second order elliptic
  equations on {R}iemannian manifolds.
\newblock In {\em Methods of functional analysis and theory of elliptic
  equations ({N}aples, 1982)}, pages 19--52. Liguori, Naples, 1983.

\bibitem{AizenmanS-82}
M.~Aizenman and B.~Simon.
\newblock Brownian motion and {Harnack} inequality for {Schr\"odinger}
  operators.
\newblock {\em Commun. Pure Appl. Math.}, 35:209--273, 1982.


\bibitem{AizenmanSW-06b}
M. Aizenman, R. Sims, and S. Warzel.
\newblock Absolutely continuous spectra of quantum tree graphs with weak
  disorder.
\newblock {\em Comm. Math. Phys.}, 264(2):371--389, 2006.



\bibitem{AlbeverioGHH-04}
S.~Albeverio, F.~Gesztesy, R.~H{\o}egh-Krohn, and H.~Holden.
\newblock {\em Solvable models in quantum mechanics}.
\newblock AMS Chelsea Publishing, Providence, RI, second edition, 2005.
\newblock With an appendix by P.~Exner.


\bibitem{AlbeverioM-91}
S.~Albeverio and Z.~M.~Ma.
\newblock Perturbation of {D}irichlet forms---lower semiboundedness,
  closability, and form cores.
\newblock {\em J. Funct. Anal.}, 99(2):332--356, 1991.

\bibitem{Allegretto-74}
W.~Allegretto.
\newblock {On the equivalence of two types of oscillation for elliptic
  operators}.
\newblock {\em Pac. J. Math.}, 55:319--328, 1974.

\bibitem{Allegretto-79}
W.~Allegretto.
\newblock Spectral estimates and oscillation of singular differential
  operators.
\newblock {\em Proc. Am. Math. Soc.}, 73:51, 1979.

\bibitem{Allegretto-81}
W.~Allegretto.
\newblock Positive solutions and spectral properties of second order elliptic
  operators.
\newblock {\em Pac. J. Math.}, 92:15--25, 1981.

\bibitem{BenderBM-99}
C.M. Bender, S.~Boettcher and P. Meisinger.
\newblock  PT symmetric quantum mechanics.
\newblock {\em J.Math.Phys.}, 40:2201-2229, 1999.



\bibitem{Berezanskii-68} Ju. M.Berezanskii.
\newblock {\em Expansions in eigenfunctions of selfadjoint operators.}
\newblock Translated from the Russian by R. Bolstein, J. M. Danskin, J. Rovnyak and L. Shulman. \newblock Translations of Mathematical Monographs, Vol. 17 American Mathematical Society, Providence, R.I. 1968

\bibitem{BeurlingD-59}
A.~Beurling and J.~Deny.
\newblock Dirichlet spaces.
\newblock {\em Proc. Nat. Acad. Sci. U.S.A.}, 45:208--215, 1959.

\bibitem{Biroli-01}
M.~Biroli.
\newblock Schr\"odinger type and relaxed Dirichlet problems for the subelliptic $p$-Laplacian.
\newblock {\em Potential Analysis}, 15, 1--16, 2001.

\bibitem{BiroliM-06}
M.~Biroli and S.~Marchi.
\newblock Harnack inequality for the Schr\"odinger problem relative to strongly local Riemannian $p$-homogeneous forms with a potential in the Kato class.
\newblock {\em  Bound. Value Probl.  2007, Art. ID 24806, 19 pp.  }


\bibitem{BiroliM-95}
M.~Biroli and U.~Mosco.
\newblock {A Saint-Venant type principle for Dirichlet forms on discontinuous
  media.}
\newblock {\em Ann. Mat. Pura Appl., IV. Ser.}, 169:125--181, 1995.

\bibitem{BouleauH-91}
N.~Bouleau and F.~Hirsch.
\newblock {\em Dirichlet forms and analysis on {W}iener space}, volume~14 of
  {\em de Gruyter Studies in Mathematics}.
\newblock Walter de Gruyter \& Co., Berlin, 1991.

\bibitem{BoutetdeMonvelS-03b}
A. Boutet~de Monvel and P. Stollmann.
\newblock Eigenfunction expansions for generators of {D}irichlet forms.
\newblock {\em J. Reine Angew. Math.}, 561:131--144, 2003.


\bibitem{BoutetdeMonvelLS-08}
A.~Boutet de~Monvel, D.~Lenz, and P.~Stollmann.
\newblock Schnol's theorem for strongly local forms.
\newblock {\em Israel J. Math.}, to appear, 2008.

\bibitem{BrascheEKS-94}
J.~F. Brasche, P.~Exner, Y.~A. Kuperin, and P.~{\v{S}}eba.
\newblock Schr\"odinger operators with singular interactions.
\newblock {\em J. Math. Anal. Appl.}, 184(1):112--139, 1994.

\bibitem{Brasche-03}
J.~F. Brasche.
\newblock On eigenvalues and eigensolutions of the Schr\"odinger equation on the complement of a set with classical capacity zero
\newblock {\em  Methods Funct. Anal. Topology}, 9(3), 189--206,   2003.
\newblock http://www.math.chalmers.se/~brasche/ett.pdf

\bibitem{ChiarenzaFG-86}
F.~Chiarenza, E.~Fabes, and N.~Garofalo.
\newblock Harnack's inequality for {S}chr\"odinger operators and the continuity
  of solutions.
\newblock {\em Proc. Amer. Math. Soc.}, 98(3):415--425, 1986.

\bibitem{CyconFKS-87}
H.~L. Cycon, R.~G. Froese, W.~Kirsch, and B.~Simon.
\newblock {\em {Schr\"odinger} Operators with Application to Quantum Mechanics
  and Global Geometry}.
\newblock Text and Monographs in Physics. Springer, Berlin, 1987.

\bibitem{Davies-90b}
E.~B.~Davies.
\newblock {\em Spectral theory and differential operators}.
\newblock Cambridge University Press, Cambridge, 1995.

\bibitem{DaviesS-84}
E.~B. Davies and B.~Simon.
\newblock Ultracontractivity and the heat kernel for {S}chr\"odinger operators
  and {D}irichlet {L}aplacians.
\newblock {\em J. Funct. Anal.}, 59(2):335--395, 1984.


\bibitem{DermenjianDI-98} Y. Dermenjian, M. Durand and V. Iftimie:
\newblock Spectral analysis of
an acoustic multistratified perturbed cylinder.
\newblock \textit{Comm. Partial
Differential Equations},  23 (1-2): 141--169 (1998)


\bibitem{terEstR-08}  A.~F.~M. ter Elst and D.~W. Robinson:
\newblock Invariant subspaces of submarkovian semigroups.
\newblock \textit{J. Evol. Equ.}, 8 (4): 661--671 (2008)


\bibitem{Exner-05}
P.~Exner.
\newblock An isoperimetric problem for leaky loops and related mean-chord
  inequalities.
\newblock {\em J. Math. Phys.}, 46(6):062105, 2005.
\newblock http://arxiv.org/abs/math-ph/0501066.

\bibitem{ExnerY-03}
P.~Exner and K.~Yoshitomi.
\newblock Eigenvalue asymptotics for the Schrödinger operator with a $\delta$-interaction on a punctured surface.
\newblock {\em  Lett. Math. Phys.},  65(1): 19--26  (2003).


\bibitem{FrankLWpre}
R.~Frank, D.~Lenz, D.~Wingert.
\newblock Intrinsic metrics for non-local symmetric Dirichlet forms and applications.
\newblock in preparation.


\bibitem{Fukushima-80}
M.~Fukushima.
\newblock {\em {Dirichlet forms and Markov processes.}}
\newblock {North-Holland Mathematical Library, Vol. 23. Amsterdam - Oxford -New
  York: North-Holland Publishing Company. Tokyo: Kodansha Ltd. X, 196 p., 1980.}

\bibitem{FukushimaOT-94}
M.~Fukushima, Y.~Oshima, and M.~Takeda.
\newblock {\em {Dirichlet forms and symmetric Markov processes.}}
\newblock {de Gruyter Studies in Mathematics. 19. Berlin: Walter de Gruyter.
  viii, 392 p., 1994.}

\bibitem{Hansen-99a}
W.~Hansen.
\newblock Harnack inequalities for {S}chr\"odinger operators.
\newblock {\em Ann. Scuola Norm. Sup. Pisa Cl. Sci. (4)}, 28(3):413--470, 1999.

\bibitem{HerbstS-78}
I.~W. Herbst and A.~D. Sloan.
\newblock Perturbation of translation invariant positivity preserving
  semigroups on {$L\sp{2}({\bf R}\sp{N})$}.
\newblock {\em Trans. Amer. Math. Soc.}, 236:325--360, 1978.

\bibitem{HoffmannHN-95}
M.~Hoffmann-Ostenhof, T.~Hoffmann-Ostenhof, and N.~Nadirashvili.
\newblock Interior {H\"older} estimates for solutions of {Schr\"odinger}
  equations and the regularity of nodal sets.
\newblock {\em Commun. Part. Diff. Eqns}, 20:1241--1273, 1995.

\bibitem{KantKVW-09}
U.~Kant, T.~Klau\ss, J.~Voigt and M.~Weber.
\newblock
Dirichlet forms for singular one-dimensional operators and on graphs.
\newblock {\em J. Evol. Equ.}, to appear.

\bibitem{Kassmann-07} M.~Kassmann.
\newblock Harnack inequalities: An Introduction.
\newblock {\em Bound. Value Probl.}  2007, Art. ID 81415, 21 pp. 35-01

\bibitem{KirschS-87}
W.~Kirsch and B.~Simon.
\newblock Comparison theorems for the gap of {Schr\"odinger} operators.
\newblock {\em J. Funct. Anal.}, 75:396--410, 1987.

\bibitem{KondejV-06a}
S.~Kondej and I.~Veseli\'c.
\newblock Lower bounds on the lowest spectral gap of singular potential
  {Hamiltonians}.
\newblock {\em Ann. Henri Poincar\'e}, 8(1):109--134, 2006.

\bibitem{LenzSS}
D.~Lenz, C.~Schubert, P.~Stollmann.
\newblock Eigenfunction expansion for Schrodinger operators on metric graphs.
\newblock  {\em Integral Equations and Operator Theory} 62 (2008), 541--553.

\bibitem{LenzSV}
D.~Lenz, P.~Stollmann  and I.~Veseli\'c.
\newblock The Allegretto-Piepenbrinck Theorem for strongly local Dirichlet forms.
\newblock: \emph{Documenta Mathematica}, 14:167--190 (2009).

\bibitem{LenzSV-pre}
D.~Lenz, P.~Stollmann  and I.~Veseli\'c.
\newblock Irreducibility and connectedness for Dirichlet forms.
\newblock {\em In preparation}, 2009


\bibitem{MaR-92}
Z.-M.~Ma and M.~R{\"o}ckner.
\newblock {\em {Introduction to the theory of (non-symmetric) Dirichlet
  forms.}}
\newblock {Universitext. Berlin: Springer-Verlag}, 1992.

\bibitem{Mezincescu-87}
G.~A. Mezincescu.
\newblock Lifschitz singularities for periodic operators plus random
  potentials.
\newblock {\em J. Statist. Phys.}, 49(5-6):1181--1190, 1987.


\bibitem{Moser-61}
J.~Moser.
\newblock On harnacks theorem for elliptic differential equations.
\newblock {\em Comm.~Pure~Appl.~Math.}, 14:577--591, 1961.

\bibitem{MossP-78}
W.F. Moss and J.~Piepenbrink.
\newblock Positive solutions of elliptic equations.
\newblock {\em Pacific J. Math.}, 75(1):219--226, 1978.

\bibitem{Piepenbrink-74}
J.~Piepenbrink.
\newblock Nonoscillatory elliptic equations.
\newblock {\em J. Differential Equations}, 15:541--550, 1974.

\bibitem{Piepenbrink-77}
J.~Piepenbrink.
\newblock A conjecture of {G}lazman.
\newblock {\em J. Differential Equations}, 24(2):173--177, 1977.

\bibitem{Pinchover-07}
Y.~Pinchover.
\newblock Topics in the theory of positive solutions of second-order elliptic
  and parabolic partial differential equations.
\newblock In {\em Spectral theory and mathematical physics: a Festschrift in
  honor of Barry Simon's 60th birthday}, volume~76 of {\em Proc. Sympos. Pure
  Math.}, pages 329--355. Amer. Math. Soc., Providence, RI, 2007.

\bibitem{Pinsky-95}
R.G.~Pinsky.
\newblock {\em Positive harmonic functions and diffusion}, volume~45 of {\em
  Cambridge Studies in Advanced Mathematics}.
\newblock Cambridge University Press, Cambridge, 1995.

\bibitem{ReedS-75}
M.~Reed and B.~Simon.
\newblock {\em Methods of Modern Mathematical Physics {II}, {Fourier} Analysis,
  Self-Adjointness}.
\newblock Academic Press, San Diego, 1975.

\bibitem{ReedS-78}
M.~Reed and B.~Simon.
\newblock {\em Methods of Modern Mathematical Physics {IV}, Analysis of
  Operators}.
\newblock Academic Press, San Diego, 1978.

\bibitem{Saloff-Coste-95}
L.~Saloff-Coste.
\newblock Parabolic {H}arnack inequality for divergence-form second-order
  differential operators.
\newblock {\em Potential Anal.}, 4(4):429--467, 1995.
\newblock Potential theory and degenerate partial differential operators
  (Parma).

\bibitem{Serrin-64}
J.~Serrin.
\newblock Local behavior of solutions of quasi-linear equations.
\newblock {\em Acta Math.}, 111:247--302, 1964.

\bibitem{Shnol-54}  \`{E}.~\`{E}.~Shnol.
\newblock On the behaviour of eigenfunctions. (Russian)
\newblock {\em Dokl. Akad. Nauk SSSR}, n. Ser. 94, 389-392 (1954).


\bibitem{Shnol-54b} \`{E}.~\`{E}.~Shnol.
\newblock The behaviour of eigenfunctions and the spectrum of Sturm--Liouville
    operators. (Russian)
\newblock {\em
Usp. Mat. Nauk } 9, No.4(62), 113-132 (1954)

\bibitem{Shnol-57} \`{E}.~\`{E}. ~Shnol.
\newblock On the behavior of the eigenfunctions of Schr\"odinger's equation. (Russian)
\newblock {\em Mat. Sb. (N.S.)} 42 (84) (1957), 273-286; erratum 46 (88) 1957 259.

\bibitem{Simon-82} B.~Simon.
\newblock  Schr\"odinger semigroups.
\newblock  {\em Bull. Amer. Math. Soc.}  7 (3): 447--526 (1982).

\bibitem{SingerWYY-85}
I.~M. Singer, B.~Wong, S.-T. Yau, and S.~S.-T. Yau.
\newblock An estimate of the gap of the first two eigenvalues in the
  {S}chr\"odinger operator.
\newblock {\em Ann. Scuola Norm. Sup. Pisa Cl. Sci. (4)}, 12(2):319--333, 1985.



\bibitem{Stollmann-92}
P.~Stollmann.
\newblock {Smooth perturbations of regular Dirichlet forms.}
\newblock {\em Proc. Am. Math. Soc.}, 116(3):747--752, 1992.

\bibitem{StollmannV-96}
P.~Stollmann and J.~Voigt.
\newblock Perturbation of {D}irichlet forms by measures.
\newblock {\em Potential Anal.}, 5(2):109--138, 1996.

\bibitem{Stollmann-01}
P.~Stollmann.
\newblock {\em Caught by disorder: A Course on Bound States in Random Media},
  volume~20 of {\em Progress in Mathematical Physics}.
\newblock Birkh\"auser, 2001.


\bibitem{Sturm-94b}
K.-T.~Sturm.
\newblock {Analysis on local Dirichlet spaces. I: Recurrence, conservativeness
  and $L\sp p$-Liouville properties.}

\bibitem{Sturm-92a}
K.-T.~Sturm.
\newblock Measures charging no polar sets and additive functionals of
  {B}rownian motion.
\newblock {\em Forum Math.}, 4(3):257--297, 1992.

\bibitem{Sturm-94c}
K.-T.~Sturm.
\newblock {Harnack's inequality for parabolic operators with singular low order
  terms.}
\newblock {\em Math. Z.}, 216(4):593--611, 1994.

\bibitem{Sturm-96}
K.T. Sturm.
\newblock {Analysis on local Dirichlet spaces. III: The parabolic Harnack
  inequality.}
\newblock  {\em  J. Math. Pures Appl.}  (9)  75  (1996), 273--297.

\bibitem{Sullivan-87b}
D.~Sullivan.
\newblock Related aspects of positivity in {R}iemannian geometry.
\newblock {\em J. Differential Geom.}, 25(3):327--351, 1987.

\bibitem{Vogt}
H.~Vogt.
\newblock A lower bound on the first spectral gap of Schr\"odinger operators
  with Kato class measures.
\newblock \emph{Ann. Henri Poincar\'e}, 10(2):395--414, (2009)



\end{thebibliography}
\end{document}